\documentclass[english]{article}
\usepackage[T1]{fontenc} 
\usepackage{textcomp,lmodern} 
\usepackage[utf8]{inputenc} 
\usepackage[babel=true]{microtype} 
\usepackage{babel} 
\usepackage{mathtools} 
\usepackage{amssymb} 
\usepackage{amsfonts} 
\usepackage{amsthm} 
\usepackage{xcolor} 
\usepackage{graphicx} 
\usepackage{todonotes}

\newtheorem{lemma}{Lemma}[section]

\newtheorem{corollary}[lemma]{Corollary}
\newtheorem{proposition}[lemma]{Proposition}
\newtheorem{claim}[lemma]{Claim}
\newtheorem{theorem}[lemma]{Theorem}
\theoremstyle{definition}
\newtheorem{definition}[lemma]{Definition}

\newtheorem{remark}[lemma]{Remark}

\DeclareMathOperator{\supp}{supp}
\DeclareMathOperator{\Alg}{Alg} 
\DeclareMathOperator{\Tr}{Tr} 
\DeclareMathOperator{\esssup}{ess\,sup} 
\DeclareMathOperator{\I}{{\mathbb{I}}}
\DeclareMathOperator{\mbbP}{{\mathbb{P}}}

\DeclareMathOperator{\U}{{\mathcal{U}}}
\DeclareMathOperator{\Gr}{{\mathcal{G}_*}}
\DeclareMathOperator{\Grr}{{\mathcal{G}_{**}}}
\DeclareMathOperator{\Hs}{{\mathcal{H}}}
\newcommand*{\field}[1]{\mathbf{#1}} 
\newcommand*\Z{\field{Z}} 
\newcommand*\N{\field{N}} 
\newcommand*\R{\field{R}} 
\newcommand*\C{\field{C}} 
\newcommand*\Un{\mathcal{U}} 

\newcommand*\eps{\varepsilon}

\makeatletter
\DeclareRobustCommand{\cev}[1]{%
  {\mathpalette\do@cev{#1}}%
}
\newcommand{\do@cev}[2]{%
  \vbox{\offinterlineskip
    \sbox\z@{$\m@th#1 x$}%
    \ialign{##\cr
      \hidewidth\reflectbox{$\m@th#1\vec{}\mkern4mu$}\hidewidth\cr
      \noalign{\kern-\ht\z@}
      $\m@th#1#2$\cr
    }%
  }%
}
\makeatother
\usepackage{hyperref}

\title{Localization of eigenfunctions in amenable unimodular random networks}

\author{Georgii Veprev}

\date{\today}

\begin{document}

\maketitle

\begin{abstract}
    For an amenable unimodular random rooted network, we show that the presence of a positive point mass in the expected spectral measure implies that, with positive probability, there exists an eigenfunction with finite support. 
\end{abstract}

\section{Introduction}

The Kaplansky’s zero divisor conjecture states that for a torsion-free group~$\Gamma$, there exist no zero divisors in the group algebra~$\C\Gamma$. That is, for any $\alpha \not = 0$ and~$\beta \not = 0$ in~$\C\Gamma$, the product $\alpha\beta$ is non-zero. It was shown by~Elek in~\cite{El} that when~$\Gamma$ is amenable, this conjecture is equivalent to its analytic version where~$\beta$ is taken in~$l^2(\Gamma)$. Namely, he showed that for any amenable group~$\Gamma$ and any non-zero $\alpha \in \C\Gamma$, if there exists non-zero~$\beta \in l^2(\Gamma)$ such that $\alpha \beta = 0$ then there exists such $\beta\in \C\Gamma$. One special case of particular interest is the case of an amenable group~$\Gamma$ generated by a finite symmetric set $S = \{s_1, s_2, \ldots, s_n\}$ and~$A = {s_1 + s_2 + \ldots + s_n} \in \C\Gamma$, the adjacency operator on the Cayley graph~$Cay(\Gamma,S)$. The result of Elek applied to $\alpha = A - \lambda$, $\lambda \in  \C$, then says that for amenable groups, the existence of an eigenfunction of the adjacency operator implies the existence of an eigenfunction with finite support. Moreover, it can be shown that for any $\lambda$ there exists a basis of the $\lambda$-eigenspace consisting of eigenfunctions with finite support. In other words, for amenable groups, all eigenfunctions are generated by finitely supported ones and, hence, the discrete part of the spectrum is described by the local geometry of the Cayley graph. Let us mention that in a group with non-trivial torsion, the adjacency operator can possess eigenfunctions and, moreover, the spectrum can be pure point as shown by Grigorchuk and \.Zuk in~\cite{GrZ}.

The goal of this paper is to prove a similar localization principle in the context of unimodular random rooted graphs and networks (that is, graphs with certain marks assigned to their edges). Such graphs appear naturally in statistical mechanics (e.g., as percolation clusters), geometric group theory (e.g., as Cayley graphs or Schreier graphs corresponding to an invariant random subgroup), and various other areas. For a survey on unimodular graphs and networks see~\cite{AL07}. 
We consider operators on rooted networks that depend only on a neighbourhood of a vertex of a fixed radius. These operators, which we call finite-range operators, are a natural counterpart of the group algebra in the graph theoretic setting. Examples of such operators include the adjacency and Laplace operators, as well as Schrödinger operators. We show in Theorem~\ref{theorem:main} (more precisely, in Corollary~\ref{theorem:amenable}) that in an amenable unimodular random rooted network, the existence of a point mass in the expected spectral measure of a finite-range operator implies the existence of a finitely supported eigenfunction with the same eigenvalue. Under a slightly stronger assumption that we call {F\o lner} balancedness, we show in Theorem~\ref{theorem:balanced} that the von Neumann dimension of an eigenspace can be approximated by the dimension of the space of eigenfunctions supported on a F\o lner set in a graph. This implies that every eigenspace has a basis consisting of finitely supported eigenfunctions (see Corollary~\ref{corollary:basis}) and, hence, the discrete part of the expected spectral measure is described by finitely supported eigenfunctions. We also show in Corollary~\ref{corollary:sofic1} that for a given $\lambda\in \C$, the von~Neumann dimension of a $\lambda$-eigenspace of a finite-range operator is a continuous function on the space of {F\o lner} balanced networks.

The localization principle was proved earlier by Lenz and Veseli{\'c}~\cite{LV}, and Grigorchuk and Pittet~\cite{GP} for Sch\"odinger operators on amenable quasi-homogeneous graphs.  An amenable quasi-homogeneous graph admits an action of an amenable group with finitely many orbits and, hence, gives rise to an amenable unimodular random graph supported on finitely many non-isomorphic rootings. Sch\"odinger operators, whether random or not, can be realised as range one operators on a network, where the potential is represented by the weights of the loop at each vertex. Hence, Theorem~\ref{theorem:main} generalises these results. Moreover, Theorem~\ref{theorem:balanced} gives a possible way to compute the exact values of the atoms in the density of states of the corresponding random graph or network. Note that in the case of quasi-homogeneous graphs, the density of states is a finite convex combination of the Kesten spectral measures corresponding to non-isomorphic rootings. Let us also note that our setup includes operators that are not necessarily self-adjoint or bounded. 

An interesting source of examples of unimodular random rooted graphs are random Schreier graphs arising from invariant random subgroups of finitely generated groups. Recall that an invariant random subgroup is a distribution on the space of subgroups that is invariant under the conjugation action of the ambient group.
For self-similar Schreier graphs of certain finitely generated groups acting on a rooted tree, it was shown earlier that the density of states is pure point, and there is a basis consisting of finitely supported eigenfunctions, see~\cite{BGJRT, GNP, Tep_serpinski}.  
In contrast to this, in Section~\ref{section:long-range}, we apply our Theorem~\ref{theorem:main} to a family of unimodular random rooted graphs that we call long-range graphs and show the continuity of the expected spectral measure of the adjacency operator (see Theorem~\ref{theorem:long-range}). Such graphs include so-called $\omega$-periodic graphs defined in~\cite{BH} and studied in~\cite{BCDN} as Schreier graphs of a certain automaton group.

The papers~\cite{El},~\cite{GP}, and~\cite{LV} use as the main tool the von Neumann algebras of the corresponding homogeneous spaces. We will use similar techniques with the help of the von Neumann trace on unimodular random rooted graphs and networks, see, e.g,~\cite{AL07}. There are several difficulties specific to the realm of unimodular random graphs that we had to overcome on the way. 

\subsection*{Acknowledgments} The author is sincerely grateful to his advisor Tatiana~Nagnibeda for her support and guidance during the work on this paper. 

\subsection{Unimodular random rooted graphs and networks}

In this paper, we work with rooted graphs and networks. To set up the necessary notation that we will use later on we follow a survey~\cite{AL07} by Aldous and Lyons. We consider locally finite connected graphs $G = (V,E)$ where~$V = V(G)$ is the set of vertices and~$E = (G)$ is the set of edges with possible multiple edges and loops. A root~$o \in G$ in a graph is a chosen vertex. Each edge~$e$ has two (possibly the same) associated vertices incident to it. The \emph{mark space}~$\Xi$ is an uncountable complete separable metric space, which one can assume to be~$\N^\N$. A \emph{rooted network} is a rooted graph~$(G,o)$ together with the marking (labeling) map~$c$ that assigns a label from $\Xi$ to every vertex, and a pair of labels for every edge corresponding to incident vertices. One can interpret these labels as orientation of edges as well as their colours or weights. Naturally we can consider rooted graphs as networks whose marks are all equal to a fixed special mark. Let~$\Gr$ be the space of all isomorphism classes of rooted networks. We consider the local topology on~$\Gr$ that can be defined by the following metric. For two rooted networks~$(G_1,o_1)$ and~$(G_2, o_2)$ let~$r$ be the maximal positive~$t$ such that there is a graph isomorphism~$f$ between $[t]$-neighbourhoods~$B_{[t]}(G_1,o_1)$ and~$B_{[t]}(G_2,o_2)$ that preserves the root and the corresponding marks are at distance at most~$\frac{1}{t}$ in~$\Xi$. The distance between~$(G_1,o_1)$ and~$(G_2, o_2)$ is the value~$\frac{1}{1 + r}$. This metric is separable and complete on~$\Gr$.
Note that a sequence $(G_n, o_n)$ of rooted networks with marks taken from a discrete subspace of~$\Xi$ converges if and only if for any radius~$R$ the balls $B_R(G_n,o_n)$ are graph-isomorphic for all large enough~$n$.  

One can choose in each element of~$\Gr$, that is, in an isomorphism class of rooted networks, its \emph{canonical representative}. The vertex set of this representative is fixed to be~$\N$ or, if the network is finite of size~$N$, an interval $\{0, \ldots, N -1\}$, and the root is set to be at~$0$. This choice can be made continuous on~$\Gr$. See~\cite{AL07} for details.

A \emph{random rooted network} is a Borel probability measure~$\rho$ on~$\Gr$. We consider weak-* topology on random rooted networks. A random rooted network~$\rho$ is called \emph{unimodular} if it satisfies the following condition called the mass-transport principle. Let~$\Grr$ be the space of isomorphism classes of networks with two distinguished vertices and~$f$ a Borel non-negative function on~$\Grr$. Then
\begin{equation}\label{eq_mass_transport}
    \int\limits_{\Gr} \sum_{x \in V(G)} f(G,o, x) =  \int\limits_{\Gr} \sum_{x \in V(G)} f(G, x, o).
\end{equation}
We denote by~$\Un$ the space of all unimodular random rooted networks. We will always assume that the random rooted network we work with is unimodular. Also, we will often omit "rooted" when referring to a measure in~$\Un$.

\subsection{Spectral measures and von Neumann trace}
Let~$\rho \in \Un$ be a unimodular random rooted network.
We consider the following Hilbert space 
$$
\mathcal{H} = \int\limits^{\oplus}_{\Gr} l^2 (V(G)) d \rho(G,o).
$$
The elements of~$\Hs$ are~$\rho$-measurable functions $f \colon \Gr \to l^2(\N)$ that assign to each isomorphism class of a rooted network a square summable function on the vertices of its canonical representative and such that 
$$
||f||^2_{\Hs} = \int\limits_{\Gr} ||f_{G,o}||^2 d \rho(G,o) < +\infty.
$$
We will write $f= \int^\oplus f_{G,o} d\rho$. Once a rooted network $(G,o)$ is understood, we will refer to~$f_{G,o}$ as a function in $l^2(V(G))$. 

Let $\{D_{G,o}\}$ be a measurable assignment of bounded linear operators where each $D_{G,o}$ acts on $l^2(V(G))$. This assignment gives rise to the following operator on $\mathcal{H}$
$$
D^\rho \colon \int\limits^{\oplus}_{\Gr} f_{G,o} d \rho(G,o) \mapsto \int\limits^{\oplus}_{\Gr} D_{G,o} f_{{G,o}} d \rho(G,o).
$$
The norm of $D^\rho$ is given by
$$
||D^\rho||_\mathcal{H} = \esssup ||D_{G,o}||_{l^2(V(G))},
$$
hence, $D^\rho$ is bounded if and only if $\rho$-almost all $||D_{G,o}||$ are uniformly bounded. 
Let $\Alg = \Alg(\rho)$ be the space of equivariant operators, that is, the operators invariant under isometries and changes of the root. This means that for any two networks~$G_1$ and $G_2$ and an (unrooted) isomorphism $\phi \colon G_1 \to G_2$
$$
\langle D_{G_1, o_1} \mathbb I _x , \mathbb I_y \rangle = 
\langle D_{G_2, o_2} \mathbb I_{\phi(x)} , \mathbb I_{\phi(y)} \rangle,
$$
where~$o_1, x, y \in V(G_1)$, $o_2 \in V(G_2)$, and~$\I_x$~is the indicator function of a vertex~$x$. In particular, the operator $D_{G,o}$ does not depend on the choice of the root~$o$ in~$G$, and we will write $D_G$ for simplicity in what follows. The space $\Alg$ forms a von Neumann algebra with a trace defined by the following functional 
$$
\Tr (D) = \int\limits_{\Gr} \langle D_{G} \mathbb I _o , \mathbb I_o \rangle d \rho(G,o).
$$
A closed subspace~$W \subset \mathcal{H}$ is called invariant if the corresponding projection~$P_W$ belongs to $\Alg$.
For an invariant subspace $W \subset \mathcal{H}$ its von Neumann dimension is the following value
$$
\dim_\rho W = \Tr(P_W).
$$
The following proposition appears to be folklore. Due to its importance for the present paper we include the proof here for completeness. 
\begin{proposition}
    The trace~$\Tr$ is faithful. That is, for any $A \in \Alg$ the equality $\Tr(A^*A) = 0$ implies~$A =0$.
\end{proposition}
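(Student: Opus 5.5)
We will show that if $\Tr(A^*A)=0$ then $\rho$-almost surely $A_G\I_x=0$ for every vertex $x\in V(G)$; by linearity and boundedness this gives $A_G=0$ for almost every $(G,o)$, and hence $A=0$ in $\Alg$. Unfolding the definition of the trace,
$$
0=\Tr(A^*A)=\int_{\Gr}\langle A_G^*A_G\I_o,\I_o\rangle\, d\rho(G,o)=\int_{\Gr}\|A_G\I_o\|^2\, d\rho(G,o).
$$
Therefore $A_G\I_o=0$ for $\rho$-almost every $(G,o)$. This controls only the column of $A_G$ at the root, so the remaining work is to pass from the root to all vertices.

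The natural tool for this passage is the mass-transport principle~\eqref{eq_mass_transport}. Define the Borel function $f(G,o,x)=\mathbf 1\{A_G\I_o\neq 0\}$ on $\Grr$. It is well defined on isomorphism classes because $A$ is equivariant: the matrix entries of $A_G$ are preserved under isomorphisms, and $A_G$ does not depend on the root. Measurability follows from the measurability of the assignment $G\mapsto A_G$. Transporting mass from every vertex $x$ to the root $o$ gives
$$
\int_{\Gr}\sum_{x\in V(G)}\mathbf 1\{A_G\I_x\neq0\}\, d\rho=\int_{\Gr}\sum_{x\in V(G)} f(G,x,o)\, d\rho=\int_{\Gr}\sum_{x\in V(G)} f(G,o,x)\, d\rho .
$$
The right-hand side vanishes, since $f(G,o,x)=0$ almost surely, and each integrand $\sum_x$ is a sum of nonnegative terms. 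Hence the left-hand side is $0$, so almost surely $A_G\I_x=0$ for all $x$. A small care point is that, for infinite $G$, the sum on the right is $\infty\cdot 0$; it equals $0$ because the indicator does not depend on $x$ and vanishes whenever $A_G\I_o=0$. One could use a bounded variant instead, such as weights $2^{-\text{index}}$, but the $\{0,\infty\}$-valued version is already fine for nonnegative functions.

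Finally, the indicators $\I_x$ span a dense subspace of $l^2(V(G))$ and $A_G$ is bounded. So $A_G=0$ for $\rho$-almost every $(G,o)$, and the norm formula $\|A\|=\esssup\|A_G\|$ gives $A=0$. The main obstacle is the propagation step: one must check that $f$ is a legitimate Borel function on $\Grr$, which requires the root-independence and equivariance of elements of $\Alg$ together with the canonical-representative measurability from~\cite{AL07}. Once that is settled, the argument reduces to a single application of~\eqref{eq_mass_transport}.
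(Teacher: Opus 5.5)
Your proof is correct and follows essentially the same route as the paper: the trace gives $A_G\I_o=0$ almost surely, and a single application of the mass-transport principle~\eqref{eq_mass_transport} propagates this from the root to every vertex. The only difference is cosmetic: you transport the indicator $\mathbf 1\{A_G\I_o\neq 0\}$, whereas the paper transports $\langle A_G^*A_G\I_x,\I_x\rangle=\|A_G\I_x\|^2$, and your remark about the $\infty\cdot 0$ convention for infinite $G$ makes explicit a point the paper leaves implicit.
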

\begin{proof}
    We have 
    $$
    0 = \Tr(A^*A) = \int \langle A_G^*A_G \mathbb I_o, \mathbb I_o\rangle d \rho(G,o) = \int \langle A_G \mathbb I_o, A_G \mathbb I_o\rangle d \rho(G,o).
    $$
    Therefore, $A_G \mathbb I_o = 0$ $\rho$-almost surely. Now let $f(G,o,x) = \langle A_G^*A_G \mathbb I _x, \mathbb I_x\rangle$. It is a Borel non-negative function which is invariant under rooted isomorphisms. The mass transport principle gives 
    $$
    \int \sum_{x \in G} \langle A_G^*A_G \mathbb I_x, \mathbb I_x\rangle d \rho(G,o)=
    \int \sum_{x \in G} \langle A_G^*A_G \mathbb I_o, \mathbb I_o\rangle d \rho(G,o) = 0.
    $$
    Then, for $\rho$-almost every~$(G,o)$ the and every~$x \in G$, $A_G\mathbb I_x =0$, hence, $A_G = 0$.
\end{proof}
Applying this to the projection operator of an invariant subspace~$W \in \mathcal{H}$, we obtain that $\dim_\rho W = 0$ if and only if~$W =0$.

\subsection{Operators of finite range}
Let~$D \in \Alg$ be a bounded operator on~$\mathcal{H}$ such that $\rho$-almost surely~$D_G$ is normal.  Let $E_{G}^D$ be the projection-valued spectral measure of the operator~$D_G$ and let $\mu_{G, o}^D = \langle E_{G}^D \I_o, \I_o \rangle$ be the scalar-valued spectral measure of~$D_G$ with respect to the indicator function of the root~$o \in G$. 
The \emph{expected spectral measure} (also known as the \emph{density of states}) of~$\rho$ for the operator~$D$ is 
$$
\mu_\rho^D = \int_{\mathcal{G_*}} \mu_{G,o}^D d \rho(G,o)
$$
Note that for any~$\lambda \in \C$ 
\begin{equation}\label{eq_dimatom}
    \mu_\rho^D(\lambda) = \dim_\rho E_\lambda^D, 
\end{equation}
where $E_\lambda^D \subset \mathcal{H}$ is the subspace of $\lambda$-eigenfunctions of~$D$. In what follows, we will often omit the indices~$D$ and~$\rho$ when referring to the expected spectral measure or the $\lambda$-eigenspace if the measure~$\rho$ and the operator~$D$ are understood. 

 Let us note the spectrum of equivariant normal finite-range operator does not depend on the choice of the root and is, in a proper sense, a measurable function on~$\Gr$. Hence if~$\rho \in \U$ is extremal, that is, can not be decomposed into a convex combination of distinct measures in~$\U$, then the spectrum of~$D$ is almost surely the same (see Theorem~4.7 in~\cite{AL07}). 

One can define the expected spectral measure for certain unbounded densely defined operators as long as the spectral theorem is applicable for $\rho$-almost every~$(G, o)$. A basic example of such an operator is the adjacency operator of a random graph that does not have a uniform bound on its degrees. The adjacency operator is always $\rho$-almost surely essentially self-adjoint if~$\rho$ is unimodular (see~\cite{bordenave2017spectrum}). In this paper, we consider a more general class of operators that depend only on some neighbourhood of a vertex. Let $a\colon \Grr \to \C$ be a continuous function such that~$a (G, x, y) = 0$ for any~$x$ and~$y$ in~$G$ at distance more than some~$r \in \N$. For a network~$G$, $x \in G$, and a function $\phi \in l^2(V(G))$ we define 
$$
D_G \phi (x) = \sum_{y \in V(G)} a(G,x,y) \phi(y).
$$
We will moreover assume that the value $a(G,o,x)$ depends only on the~$r$-neighbourhood~$B_r(G,o)$ and not on the rest of the network~$G$. We will call such operators \emph{operators of finite range~$r$} (also known as \emph{local operators}\footnote{This latter assumption of finite dependences of the coefficients can often be omitted in our arguments. The main use of this condition is Corollary~\ref{corollary:approximation}, where we consider the same eigenfunctions on different random networks.}, see~\cite{bordenave2025}). If 
$$
\esssup_\rho \sum_{x \in V(G)}|a(G,o,x)|
$$
is finite then we can view~$D = \{D_G\}$ as an element of~$\Alg(\rho)$. If, moreover,~$a$ is symmetric, meaning that $a(G,o,x) = \overline{a(G,x,o)}$ for every $o,x \in G$, then the operator~$D$ is self-adjoint. If~$D_G$ is normal for~$\rho$ almost every~$(G,o)$ then~$D$ is a normal operator on $\mathcal{H}$. 

In the case when $\sum_{x \in V(G)}|a(G,o,x)|$ is not uniformly bounded the operator~$D_G$ is still defined on any function $f \in l^2(V(G))$ with finite support. If~$a$~is symmetric and real, then~$D$ is almost surely essentially self-adjoint~\cite{bordenave2017spectrum}, and one can, hence, define its density of states. 
Let us note that in our setting the operator~$D_G$  is defined for all networks~$(G,o) \in \Gr$.

\subsubsection{Eigenspaces of operators of finite range}

As mentioned above, in order to define the spectral measure of an operator~$D$ on~$\mathcal{H}$, one has to assume that the spectral theorem can be applied to this operator. Although this can be done for a large class of operators, the analysis of eigenfunctions that we do in this paper is possible in a greater generality when the spectral theorem is not applicable. In fact, as it comes to the operator, only the assumption of the finite range will play a role. 
Let~$D$ be an operator of finite range and~$\lambda \in \C$. The $\lambda$-eigenspace of~$D$ is the subspace of~$\mathcal{H}$ defined as
$$
E_\lambda^D = \int\limits^\oplus_{\Gr} E_\lambda^D(G)d \rho,
$$
where~$E^D_\lambda(G)\subset l^2(G)$ is the $\lambda$-eigenspace of~$D_G$ on~$G$. Since the operator~$D$ is of finite range, the subspace~$E_\lambda^D$ is closed and invariant. 
Clearly~$\lambda$ is an eigenvalue for~$D$ on~$\mathcal{H}$ if and only if $E_\lambda^D$ is non-empty, that is, $\dim_\rho(E_\lambda^D) > 0$. In view of formula~\eqref{eq_dimatom}, the von Neumann dimension $\dim_\rho(E_\lambda^D)$ will be our main value of interest. 

\subsection{Amenability}

\subsubsection{Amenable random networks}\label{subsection_amen}

There exist several natural notions of amenability for unimodular random rooted graphs and networks (see, e.g.,~\cite{AL07, Ka1997amenability}). Most of these conditions are equivalent under the assumption of finite expected degree. The one that is the most convenient for us in the context of this paper is the one proposed in~\cite{AL07} as it deals with a kind of F\o lner condition. 

For a subset $A\subset G $, we denote by $\partial A$ its inner vertex boundary, that is, the set of points $x \in A$ that are adjacent to a vertex of~$G$ outside~$A$. Similarly, for a non-negative integer number~$r$ we will denote by $\partial_r A$ the set of points $x \in A$ that are at (edge) distance at most~$r$ from~$G \setminus A$. In particular, $\partial_0 A = \emptyset$ and $\partial_1 A = \partial A$. We will also consider the \emph{edge-boundary $\partial_E A$}, that is, the set of edges which have one end in~$A$ and the other outside~$A$. 
A \emph{percolation} on~$\rho \in \U$ is a unimodular random rooted network~$\rho^\prime \in \U(\Xi \times\Xi)$ together with a subset $\Xi_0 \subset\Xi$ such that the projection~$\pi \colon \U(\Xi \times\Xi) \to \U(\Xi)$ that forgets the second component of the marks maps~$\rho^\prime$ to~$\rho$. One can, of course, identify~$\Xi \times\Xi$ with a subset of~$\Xi$ and consider~$\rho^\prime$ as an element of~$\U(\Xi)$.  The~$\Xi_0$-component of a vertex~$x$ is the set of vertices reachable from~$x$ by edges whose second marks belong to~$\Xi_0$. We will denote by~$K(\Xi_0)$ the~$\Xi_0$-component of the root. Let~$FC(\rho)$ be the set of percolations on~$\rho$ that have only finite components almost surely. The \emph{isoperimetric constant} of~$\rho$ is the following value 
\begin{equation}\label{eq_amenability}
\imath_E(\rho) = \inf \int\frac{|\partial_E K(\Xi_0)|}{|K(\Xi_0)|} d \rho^\prime(G,o), 
\end{equation}

where $(\rho^\prime, \Xi_0)$ ranges over all percolations in $FC(\rho)$. Unimodular random rooted network~$\rho$ is called \emph{amenable} if~$\imath = 0$. In other words, $\rho$~is amenable if there exists a sequence of percolations whose clusters of the origin yield a F\o lner sequence.
One can also define the isoperimetric constant~$\imath_r$ for a positive integer~$r$ by replacing the edge-boundary~$\partial_E$ in formula~\eqref{eq_amenability} with the $r$-boundary~$\partial_r$. For a unimodular~$\rho$ with finite expected degree, the amenability condition $\imath_E = 0$ is equivalent to $\imath_r = 0$ for any positive integer~$r$ (see the proof of Proposition~\ref{proposition:rFolnler} in the following section).

\subsubsection{Balanced F\o lner sequences}

The definition of an amenable unimodular random network above has a disadvantage that the F\o lner sets that appear there are, in a sense, external to the realisations of the random graphs and come from different percolation processes on them. In this paper, we will use a different approach focusing on the F\o lner sequences inside the rooted networks and their properties.
Suppose that $\rho \in \mathcal{U}$ is such that for every positive integer~$n$ there exists a measurable assignment~$\{F_n(G,o)\}$ of finite subsets of~$V(G)$ such that 
\begin{equation}\label{eq:Folner}
    \lim_{n \to \infty} \int \frac{|\partial F_n (G,o)|}{|F_n(G,o)|} d\rho= 0.
\end{equation}
Naturally, we call such a sequence a \emph{F\o lner sequence} for~$\rho$. Note that the existence of a F\o lner sequence implies leaf-wise ($\rho$-almost surely) amenability of the graph. 
Different choice of the type of the boundary leads to several other versions of the F\o lner condition. We call a sequence~$\{F_n\}$ \emph{$r$-F\o lner} if the convergence~\eqref{eq:Folner} holds when~$\partial$ is replaced by~$\partial_r$ and \emph{edge-F\o lner} if~$\partial$ is replaced by~$\partial_E$. 
For graphs of uniformly bounded degree, this clearly does not give anything new. However, this does make a difference for locally finite graphs without uniform bounds, which we would like to keep under consideration.

For a measurable assignment of a subset $A(G,o) \subset G$, the function~$g_A$ is defined as
\begin{equation}\label{definition:g_function}
    g_A(G,o) = \sum\limits_{o \in A(G, x)} \frac{1}{|A(G,x)|}.
\end{equation}
The use of this function is as follows. Suppose~$\phi$ is a non-negative measurable function on $\mathcal{G_*}$. Then the average value of~$\phi$ over~$A$ can be rewritten via the mass transport principle 
\begin{multline}\label{eq:avfunction}
    \int \frac{1}{|A(G,o)|} \sum_{x \in A(G,o)} \phi(G,x) d\rho(G,o) = \\ \int \phi(G,o) \sum_{o \in A(G,x)} \frac{1}{|A(G,x)|} d\rho(G,o) = \int \phi g_A d\rho.
\end{multline}
In particular, we have $\int g_A d \rho= 1$. Let us also note that $\esssup g_A$ is the norm of the operator acting on $L^1(\Gr, \rho)$ by taking the average value over~$A$. We would like to have certain control over averages over the F\o lner sets in our graphs. As formula~\eqref{eq:avfunction} shows,  in order to do that one should impose regularity conditions on the function~$g$. This motivates the following definition.
\begin{definition}
    We call a unimodular random network~$\rho$ ($r$-)\emph{{F\o lner} balanced} if there exists a measurable ($r$-)F\o lner sequence $F_n(G,o)$ such that for $\rho$-almost every~$(G,o)$
    $$
    \lim_{n \to \infty} g_{F_n}(G,o)  = 1. 
    $$
    We call $\rho$ \emph{weakly} ($r$-)\emph{{F\o lner} balanced} if there exists a measurable ($r$-)F\o lner sequence $F_n(G,o)$ and constant $c > 0$ such that for $\rho$-almost every $(G,o)$ 
    $$
     \liminf_{n\to \infty}g_{F_n}(G,o) > c. 
    $$  
     Similarly, we define~\emph{edge-{F\o lner} balanced} and \emph{weakly edge-{F\o lner} balanced} unimodular random networks.
\end{definition}

The following simple proposition allows one to reduce the $r$-boundary to the standard boundary of a F\o lner set under certain regularity conditions. 
\begin{proposition}\label{proposition:rFolnler}
    Suppose that $\rho \in \mathcal{U}$ has a finite expected degree of the root. Let $\{F_n\}$ be a F\o lner sequence such that $g_{F_n}< C$ almost surely for all~$n$. Then for any positive integer~$r$
    $$
    \lim_{n \to \infty} \int \frac{|\partial_r F_n (G,o)|}{|F_n(G,o)|} d\rho= 0.
    $$
\end{proposition}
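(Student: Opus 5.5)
We reduce the $r$-boundary to the $1$-boundary by induction on $r$, controlling the growth through the mass transport principle~\eqref{eq:avfunction} and the bound $g_{F_n}<C$. The key observation is that every vertex of $\partial_{r} F_n$ lies in $F_n$ and has a neighbour in $\partial_{r-1}F_n$ (or lies in $\partial_{r-1}F_n$ itself): if $x\in F_n$ is at distance $k\le r$ from $G\setminus F_n$, then a neighbour of $x$ on a shortest path is at distance $k-1$, and it is either outside $F_n$ (so $x\in\partial F_n$) or in $\partial_{k-1}F_n$. Hence
$$
|\partial_r F_n| \le |\partial F_n| + \sum_{y\in \partial_{r-1}F_n} \deg(y),
$$
and iterating shows that $|\partial_r F_n|$ is bounded by sums of degree-type weights over $\partial F_n$. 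In the bounded-degree case, this already gives $|\partial_r F_n|\le (1+d+\dots+d^{r-1})|\partial F_n|$, so the statement is immediate.

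For unbounded degrees, we write the average over $F_n$ using \eqref{eq:avfunction}. Let $\phi_n(G,x)=\mathbb{I}[x\in\partial_r F_n(G,o)]$; this is not a function of $(G,x)$ alone, so we use the mass transport version with $f(G,o,x)=\mathbb{I}[x\in \partial_r F_n(G,o)]/|F_n(G,o)|$. Transporting mass gives
$$
\int \frac{|\partial_r F_n|}{|F_n|}\,d\rho=\int \sum_{x:\,o\in \partial_r F_n(G,x)} \frac{1}{|F_n(G,x)|}\,d\rho(G,o) \le \int h_n\, d\rho,
$$
where $h_n(G,o)\le g_{F_n}(G,o)\le C$. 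For $r=1$, this integral tends to $0$ by hypothesis. The plan is to show $\int h_n^{(r)} d\rho\to 0$ from $\int h_n^{(1)}d\rho\to 0$ by a uniform-integrability argument. Truncate at degree level $M$: on the event that the whole $r$-ball of the root has degrees at most $M$, the contribution to $\partial_r$ is controlled by $(1+M+\dots+M^{r-1})$ times boundary contributions of nearby vertices. Another mass transport moves these contributions onto $\partial F_n$, giving a term $\le K_M \int h_n^{(1)}d\rho\to 0$. On the complementary event $\Omega_M$, where some vertex in $B_r(G,o)$ has degree $>M$, we bound $h_n^{(r)}\le C$, so the contribution is at most $C\rho(\Omega_M)$.

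It remains to show that $\rho(\Omega_M)\to 0$ as $M\to\infty$. This holds simply because $B_r(G,o)$ is almost surely finite, by local finiteness, so $\Omega_M$ decreases to a null set. Thus $\limsup_n \int |\partial_r F_n|/|F_n|\,d\rho \le C\rho(\Omega_M)$ for every $M$, which gives the claim.

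The main obstacle is the truncation step: making the transport from $\partial_r$-vertices to $\partial$-vertices rigorous while keeping the bound $C$ on $g$. I would define the transport $f(G,o,x)$ as the indicator that $o\in\partial_r F_n(G,x)$, $o$ has a low-degree $r$-ball, and $x$'s set has a boundary point within distance $r$ of $o$, weighted by $1/|F_n(G,x)|$. Each boundary point receives mass from at most $K_M$ such points, since points whose $r$-ball has degrees $\le M$ and which lie within distance $r$ of a given boundary point $z$ lie in the ball of radius $r$ around $z$ using only edges with degree $\le M$. The finite expected degree hypothesis may be used, if needed, to handle the counting near high-degree boundary points via the bound $C\rho(\Omega_M)$.
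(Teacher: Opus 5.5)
Your proof is correct, and it takes a different route from the paper's. The paper inducts on $r$ using the deterministic bound $|\partial_2 A|\le\sum_{x\in\partial A}\deg x$. After mass transport this becomes $\int\deg(o)\,h_n(G,o)\,d\rho$ with $0\le h_n\le C$ and $\int h_n\,d\rho\to0$, and integrability of the degree is exactly what makes this tend to zero. You instead charge each $o\in\partial_rF_n(G,x)$ directly to a point of $\partial F_n(G,x)$ within distance $r-1$. You also truncate on the degrees in the ball around the $\partial_r$-vertex, rather than weighting the boundary vertex by its degree. The bad event then costs only $C\rho(\Omega_M)$, which tends to $0$ by local finiteness alone. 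So your argument never uses the finite expected degree hypothesis, and your closing hedge can be dropped.

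Your argument actually proves more: the conclusion holds as soon as $g_{F_n}<C$. Since $h_n^{(r)}\le g_{F_n}$, it also holds when $\{g_{F_n}\}$ is merely uniformly integrable, and for bounded degree with no condition on $g$. This covers the H\"older-type variants stated after the proposition. The paper's route is shorter: one deterministic inequality plus pairing $\deg\in L^1$ against $h_n\le C$. It also transfers verbatim to the edge boundary $\partial_E$, as the paper remarks.

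One point to fix in the write-up. The transport $f(G,o,x)$ in your last paragraph sends mass to the centre $x$, which just reproduces the quantity you started with. It should be indexed by the boundary point, e.g.
\[
\psi(G,o,z)=\mathbb{I}[(G,o)\notin\Omega_M]\,\mathbb{I}[d(o,z)\le r-1]\sum_{x\in V(G)}\frac{\mathbb{I}[o\in\partial_r F_n(G,x)]\,\mathbb{I}[z\in\partial F_n(G,x)]}{|F_n(G,x)|}.
\]
Then $\sum_z\psi(G,o,z)\ge\mathbb{I}[(G,o)\notin\Omega_M]\,h_n^{(r)}(G,o)$. Also $\sum_o\psi(G,o,z)\le(1+M+\dots+M^{r-1})\,h_n^{(1)}(G,z)$: each contributing $o$ is joined to $z$ by a path of length at most $r-1$ whose vertices all lie in $B_{r-1}(G,o)$ and so have degree at most $M$. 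The mass transport principle applied to $\psi$ then gives your bound $K_M\int h_n^{(1)}\,d\rho$ rigorously.
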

\begin{proof}
    Let us first prove the claim for~$r = 2$. Clearly $|\partial_2 A| \le \sum_{x \in \partial A} \deg x$. We have 
    \begin{multline}\label{eq:50506}
         \int \frac{|\partial_2 F_n (G,o)|}{|F_n(G,o)|} d\rho \le 
         \int \frac{\sum_{x \in \partial F_n(G,o)} \deg x}{|F_n(G,o)|} d\rho = \\
         \int \deg o\sum_{o \in \partial F_n(G,x)} \frac{1}{|F_n(G,x)|} d\rho = 
         \int \deg (o) h_n(G,o) d\rho,
    \end{multline}
    where $0\le h_n(G,o) \le g_{F_n}(G,o) < C$ and $\int h_n$ tends to zero in~$n$. This implies that the last integral in formula~\eqref{eq:50506} tends to zero as well. 
One can now proceed by induction in~$r$.
\end{proof}
Note that the conclusion of Proposition~\ref{proposition:rFolnler} holds if~$||g_{F_n}||_\infty$ is uniformly bounded and $||\deg||_1 < \infty$ or if $||\deg ||_\infty < \infty$ and $||g_{F_n}||_1$ is uniformly bounded (which is automatic as $||g_{A}||_1 = 1$ for any~$A$). A slightly more general sufficient condition can be deduced from the proof of Proposition~\ref{proposition:rFolnler} and the Holder inequality: it is enough to require that $||\deg||_p$ is finite and $||g_{F_n}||_q$ is uniformly bounded for some~$p, q \in [1, +\infty]$ such that $\frac{1}{p} + \frac{1}{q} = 1$. Let us also note that the identical  proof applies to the {edge-boundary}~$\partial_E F_n(G,o)$.
Similar computation to inequality~\eqref{eq:50506} shows that the condition~$\imath_E= 0$ that defines amenability (see Section~\ref{subsection_amen}), and the condition~$\imath_r =0$ are equivalent for unimodular random networks with finite expected degree and any positive integer~$r$.

Another fact that will be of use later on is that we can choose our ($r$-)F\o lner sequences to be continuous functions for every~$n$. To be precise, we prove the following proposition. 
\begin{proposition}\label{proposition:Folner_continuous}
Let~$\{\tilde F_n\}$ is a measurable sequence of subsets satisfying some of the above properties (($r$-)F\o lner, (weakly) balanced). Then there exists a measurable sequence~$\{F_n\}$ and a sequence of positive integers~$d_n$ such that for every~$(G,o) \in \mathcal{G}_*$ we have $F_n(G,o) \subset B_{d_n}(G,o)$, the subset $F_n(G,o)$ depends only on~$B_{d_n}(G,o)$, and $\{F_n\}$ satisfies the same properties as $\tilde F_n$. 
\end{proposition}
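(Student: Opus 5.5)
The plan is a truncation argument: cut each $\tilde F_n$ down to a large ball around the root, discard the sets that do not fit inside the ball, and then approximate the result by a set depending only on a finite neighbourhood. The sets kept or discarded must be chosen consistently across vertices of the same graph, so that the balancedness function $g$ is controlled via the mass transport principle.

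\emph{Truncation.} Fix $n$. Since $\tilde F_n(G,o)$ is finite almost surely, for any $\delta>0$ there is a radius $d$ with
$$\rho\{(G,o)\colon \tilde F_n(G,o)\not\subset B_{d}(G,o)\}<\delta.$$
Define $F'_n(G,o)=\tilde F_n(G,o)$ if $\tilde F_n(G,o)\subset B_{d}(G,o)$, and $F'_n(G,o)=\{o\}$ otherwise. Let $\Omega_n$ denote the exceptional set where $\tilde F_n(G,o)\not\subset B_d(G,o)$.

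\emph{Effect on the F\o lner ratio.} On the good set the ratio $|\partial_r F|/|F|$ is unchanged. On $\Omega_n$ it is at most $1$, so the integral increases by at most $\delta$.

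\emph{Effect on $g$.} Write $g_{F'_n}(G,o)$ as the sum over $x$ with $o\in F'_n(G,x)$. Terms coming from good $x$ are terms of $g_{\tilde F_n}(G,o)$. The singleton replacement adds the term $\mathbb I_{\Omega_n}(G,o)$. Hence
$$|g_{F'_n}-g_{\tilde F_n}|(G,o)\le \mathbb I_{\Omega_n}(G,o)+\sum_{x\in\Omega_n,\ o\in\tilde F_n(G,x)}\frac{1}{|\tilde F_n(G,x)|}.$$
By mass transport, the integral of the second term equals $\rho(\Omega_n)<\delta$. So the $L^1$ error is at most $2\delta$.

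\emph{Preserving the almost-sure limits.} Choose $\delta=\delta_n=2^{-n}$. By Borel--Cantelli, since $\sum_n 2\cdot 2^{-n}<\infty$, the error terms tend to $0$ almost surely. Therefore $\lim g_{F'_n}=1$ almost surely, and the $\liminf$ bound in the weak case is preserved (with constant $c$ replaced by anything smaller, or kept, since the limit is unaffected). This gives $F'_n\subset B_{d_n}(G,o)$ with all the required properties.

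\emph{Locality.} It remains to make $F'_n$ depend only on $B_{d_n}(G,o)$, possibly after enlarging $d_n$. This is the main obstacle, since $\tilde F_n$ is merely measurable.

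First I would approximate the measurable set-valued map by a continuous one. The map $(G,o)\mapsto F'_n(G,o)$, viewed through the canonical representative, takes countably many values, namely finite subsets of vertices in $B_{d_n}$. Each level set is Borel. By regularity of $\rho$ on the Polish space $\Gr$, each level set can be approximated in measure by a finite union of cylinder sets determined by $B_{d'}(G,o)$ up to mark precision $1/d'$. A further difficulty is that marks take values in $\Xi$; this is handled by making $F_n$ depend on the ball with marks rounded to a finite partition of $\Xi$ of mesh less than $1/d'$. This is still a function of $B_{d'}(G,o)$, as the statement requires.

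Off a set of measure less than $2^{-n}$, the resulting local map $F''_n$ agrees with $F'_n$; elsewhere set $F''_n=\{o\}$. One subtlety is that the approximating map must be well defined on isomorphism classes, meaning invariant under automorphisms of the ball. To arrange this, I would define the local map on the finite set of isomorphism types of discretized balls and choose subsets up to automorphism. The level set of each type, modulo relabeling, is again Borel, so this causes no new problem.

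The same mass-transport estimate then bounds both the F\o lner error and the $g$-error by $2\cdot 2^{-n}$. A final Borel--Cantelli argument yields the almost-sure statements for $g_{F_n}$ with $F_n=F''_n$ and $d_n=\max(d,d')$.
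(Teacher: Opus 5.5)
Your proposal is correct and follows essentially the same route as the paper: truncate $\tilde F_n$ to $B_{d_n}$ (using $\{o\}$ on the exceptional set), approximate the Borel level sets by cylinder sets, note that the F\o lner ratio grows by at most the measure of the exceptional set while $\|g_{F_n}-g_{\tilde F_n}\|_1$ is at most twice that measure by mass transport, and take summable errors so the almost-sure limits are preserved. Your direct pointwise bound on $|g_{F'_n}-g_{\tilde F_n}|$ is a cleaner form of the paper's duality estimate with bounded test functions $\phi$; the one slip is minor: a non-compact mark space such as $\N^\N$ has no finite partition of small mesh, so take a countable partition and discard the cells of small total mass.
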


\begin{proof}
    First, we replace $\tilde F_n$ by~$F_n$ defined as 
    $$ F(G,o)= \tilde F_n(G,o) \text{ if } \tilde F_n(G,o) \subset B_{d_n}(G,o); \quad 
    F_n(G,o) = \{o\} \text{ otherwise},$$ 
    where~$d_n$ is a sufficiently large constant to be defined later. Then for any $\eps_n > 0$ and any sufficiently large~$d_n$
    \begin{equation}\label{eq:approxfolner}
        \mbbP \left( F_n(G,o) \not = \tilde F_n(G,o) \right) < \eps_n.
    \end{equation}

    We can also assume, preserving inequality~\eqref{eq:approxfolner}, that either the degrees of all vertices in $B_{d_n}$ are no more than~$d^\prime_n$, which depends on~$d_n$, or $F_n(G,o) = \{o\}$. Now, we can partition~$\mathcal{G_*}$ by the values of~$F_n(G,o)$ in~$B_{d_n}$ viewed as subsets of~$\N$ in the canonical representative of~$(G,o)$. This is a finite partition into Borel subsets. This partition can be approximated by a finite partition into the cylinder sets. This defines a function satisfying~\eqref{eq:approxfolner} and which depends only on the neighbourhood of the root of size~$R$ for some finite~$R$. We can then redefine~$d_n$ to be the maximum of~$d_n$ and the number~$R$. 

    Given relation~\eqref{eq:approxfolner} we have for any positive~$r$
    $$
     \int \frac{|\partial_r F_n (G,o)|}{|F_n(G,o)|} d\rho \le \int \frac{|\partial_r\tilde F_n (G,o)|}{|\tilde F_n(G,o)|} d\rho + \eps_n.
    $$
    Moreover~for any positive $\phi \in L^\infty(\mathcal{G}_*, \rho)$
    $$
    \int g_{F_n}(G,o)\phi(G,o) d\rho  = \int \frac{1}{|F_n(G,o)|} \sum_{x \in F_n(G,o)} \phi(G,x) d\rho.
    $$
    And, hence, 
    \begin{equation}\label{eq_contfolner}
        \left| \int g_{F_n}(G,o)\phi(G,o) d\rho  - \int g_{\tilde F_n}(G,o)\phi(G,o) d\rho \right| \le 2\eps_n ||\phi||_\infty,
    \end{equation}
    as any average of~$\phi$ does not exceed $||\phi||_\infty$. It is, therefore, enough to require~$\eps_n$ to decay to zero for preserving the property of being~$r$-F\o lner. If this decay is sufficiently fast, then the sequence~$F_n$ inherits the property of being balanced or weakly balanced. Indeed, assume~$\{\tilde F_n\}$ is balanced. One can choose $\phi (G,o)$ to be the sign of the difference $g_{F_n}(G,o) - g_{\tilde F_n}(G,o)$. Inequality~\ref{eq_contfolner} then implies convergence in $L^1(\Gr, \rho)$ of $g_{\tilde F_n}$ to~$1$. Hence, if the sequence~$\eps_n$ is summable, then we have convergence almost surely, that is, balancedness of~$\{F_n\}$. Similarly, one can treat the case of weakly balanced~$\{\tilde F_n\}$.
\end{proof}

Note that Proposition~\ref{proposition:Folner_continuous} implies, in particular, that the function $g_{F_n}$ is continuous on~$\Gr$ and depends only on $2d_n$-neighbourhood of the root.

Finally, we show that the {F\o lner} balancedness (more precisely, edge-{F\o lner} balancedness) implies the amenability of the random network in the sense of Section~\ref{subsection_amen}.

\begin{proposition}\label{proposition:balance_implies_amen}
    Suppose that~$\rho$ is an edge-{F\o lner} balanced measure on $\mathcal{G}_*$ of finite expected degree. Then~$\rho$ is amenable. 
\end{proposition}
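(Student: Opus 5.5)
We must build percolations with finite clusters whose root cluster has small edge-boundary ratio. The idea is to turn the edge-F\o lner sets $F_n(G,o)$ into a random partition into finite pieces, and to use balancedness to show that the pieces which carry the root are typically $F_n$-sets themselves, or large parts of them.

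First, by Proposition~\ref{proposition:Folner_continuous}, assume each $F_n(G,o)$ depends only on $B_{d_n}(G,o)$ and lies inside it. Attach i.i.d.\ uniform $[0,1]$ labels $U_x$ to the vertices; this keeps the measure unimodular. Fix $n$. Each vertex $x$ claims the set $F_n(G,x)$. Assign a vertex $y$ to the claim with the smallest label among the vertices $x$ with $y\in F_n(G,x)$, provided such $x$ exist; otherwise $y$ forms a singleton. Since $F_n(G,x)\subset B_{d_n}(G,x)$ and the graph is locally finite, only finitely many $x$ compete for $y$. Hence the rule is measurable and equivariant. The clusters are the nonempty sets $C_x=\{y : y \text{ assigned to } x\}\subset F_n(G,x)$, split further into connected components, together with the singletons. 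All clusters are finite, so the construction gives a percolation in $FC(\rho)$.

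Next we bound $\int |\partial_E K|/|K|\,d\rho'$. By the mass transport principle this expected ratio equals $\mathbb E[\sum_{\text{clusters }K\ni\text{ rooted at }x}|\partial_E K|]$ normalised per vertex. More precisely, for any percolation with finite clusters, $\int|\partial_EK(o)|/|K(o)|$ is equal to the expected number of edges leaving the root's cluster, counted at the root, by sending mass $1/|K|$ from each vertex to each boundary edge end. So it suffices to bound the expected number of boundary edges at the root. An edge from $o$ is a boundary edge either because it lies in $\partial_E F_n(G,x)$ for the claim $x$ owning $o$, or because it joins two different claims, or because it touches a singleton. Balancedness $g_{F_n}\to 1$ a.s. says that $o$ lies in $F_n$-sets of total weight about $1$. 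The intended use is to show that with high probability most of $F_n(G,x)$ is owned by $x$ when $x$ wins. For instance, we can restrict to claimants $x$ with $U_x$ small relative to the local overlap, or take a sparse random subset of claimants with density chosen so that the claims rarely overlap, while balancedness controls coverage. The edges lying on $\partial_E F_n$ contribute $\int|\partial_E F_n|/|F_n|\to0$, again by mass transport via $g_{F_n}$ and the finite expected degree; this is the same computation as~\eqref{eq:50506}.

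The main obstacle is the overlap and coverage step. Claims may overlap heavily, so the clusters $C_x$ can be fragmented and create many internal boundary edges. What I would try first is to work only with $\int g_{F_n}=1$ and $g_{F_n}\to1$, which show that a typical vertex lies in essentially one $F_n$-set on average in the $1/|F|$-weighted sense. I would then combine this with Fatou/Scheffé to get $\int|g_{F_n}-1|\to0$. The expected number of boundary edges created by overlaps should then be bounded by $\int\deg(o)\,|g_{F_n}(o)-1|$-type quantities plus the F\o lner term. To make $\int \deg\cdot|g_{F_n}-1|\to0$ under only a finite expected degree, I would truncate the degree at a level $M$ and use uniform integrability of $\deg$. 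The part where the degree is at most $M$ is controlled by $M\|g_{F_n}-1\|_1$, and the tail is handled separately. I expect this last bookkeeping, in particular producing a genuinely disjoint selection, to be the delicate step.
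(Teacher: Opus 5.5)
\textbf{There is a genuine gap.} It is the step you defer, and that step is the whole content of the proof. With uniform labels your claim rule does not control cut edges. The quantity you hope to bound by, $\int\deg\,|g_{F_n}-1|$ plus the F\o lner term, is also not the right one.

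Write $S_o=\{x: o\in F_n(G,x)\}$. Balancedness controls the weighted multiplicity $g_{F_n}(G,o)=\sum_{x\in S_o}1/|F_n(G,x)|$. Under your rule, however, the edge $(o,y)$ is cut with probability $|S_o\triangle S_y|/|S_o\cup S_y|$, which is a ratio of \emph{counts}. A vanishing fraction of claimants with huge, badly shaped sets can dominate the counts while carrying negligible weight.

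Here is an example. Take $\Z$ marked, for each $n$, by a partition into blocks of length $n$ with uniform random offset, and by a Bernoulli$(1/n)$ set of special vertices. A non-special $x$ claims its block; a special $x$ claims $\{y:|y-x|\le n^3,\ y-x\text{ even}\}$. Then $g_{F_n}\to1$ a.s. and $\int|\partial_EF_n|/|F_n|\approx 4/n$. Yet $S_o$ contains about $n^2$ special claimants of the parity of $o$, against about $n$ block claimants. So with uniform labels almost every edge $(o,o+1)$ is cut, and the expected boundary ratio tends to $2$.

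Even for $F_n(x)=[x-n,x+n]$ in $\Z$, where $g_{F_n}\equiv 1$, overlaps cut edges. The cut rate is governed by symmetric differences of the covering sets $S_o$, not by $|g_{F_n}-1|$.

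\textbf{The missing idea is to weight the competition by $w(x)=1/|F_n(G,x)|$.} Give $x$ the clock $T_x=|F_n(G,x)|\,E_x$, with $E_x$ i.i.d.\ exponential of mean one, and assign $y$ to the minimiser of $T_x$ over $S_y$. Then $(o,y)$ is cut with conditional probability
\[
\frac{w(S_o\triangle S_y)}{w(S_o\cup S_y)}\le \frac{w(S_o\triangle S_y)}{g_{F_n}(G,o)}.
\]
The sum $\sum_{y\sim o}w(S_o\triangle S_y)$ is exactly the integrand on the left of \eqref{eq_am4567567}, whose integral is $2\int|\partial_EF_n|/|F_n|\,d\rho\to0$.

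To finish:
\begin{itemize}
\item bound the number of cut edges at $o$ also by $\deg o$;
\item pass to a subsequence along which this sum tends to $0$ almost surely;
\item use $g_{F_n}\to1$ a.s.\ together with dominated convergence to get $\imath_E(\rho)=0$.
\end{itemize}
Your mass-transport reduction to the expected number of cut edges at the root is correct, and it is what makes this work.

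Repaired this way, your route is a genuinely different, self-contained proof. The paper instead verifies the Reiter-type criterion of Theorem~8.5 in \cite{AL07} for $\lambda_n(G,o,x)=\mathbb{I}(o\in F_n(G,x))/(g_{F_n}(G,o)|F_n(G,x)|)$, and lets that theorem supply the percolations. The weighted clocks instead couple these probability vectors across vertices and build the finite-cluster percolation explicitly.
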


\begin{proof}
    Due to Theorem~8.5 in~\cite{AL07} it suffices to find a sequence of measurable functions $\lambda_n \colon \Grr \to [0,1]$ such that for $\rho$~almost any~$(G,o)$
    \begin{equation}\label{eq_amen1}
        \sum_{x \in V(G)} \lambda_n(G, o, x) = 1,
    \end{equation}
    and
    \begin{equation}\label{eq_amen2}
        \lim_{n\to \infty} \int \sum_{y \sim o} \sum_{x \in V(G)} |\lambda_n(G,o,x) - \lambda_n(G,y,x)| d \rho(G,o) = 0.
    \end{equation}
    Let $\{F_n\}$ be a balanced edge-F\o lner sequence for~$\rho$. We put
    $$
    \lambda_n(G,o, x) = \frac{1}{g_{F_n}(G,o)} \cdot\frac{\I(o \in F_n(G,x))}{ |F_n(G,x)|}.
    $$
    Condition~\eqref{eq_amen1} is automatically satisfied due to the definition of $g_n$. Let us show that the second condition~\eqref{eq_amen2} holds for our choice of $\lambda_n$-s. To be precise, we will show that it is satisfied for a subsequence. First, we show that
    \begin{equation}\label{eq_amen3}
        \lim_{n\to \infty} \int \sum_{y \sim o} \sum_{x \in V(G)} |g_{F_n}(G,o)\lambda_n(G,o,x) - g_{F_n} (G,y)\lambda_n(G,y,x)| d \rho(G,o) = 0.
    \end{equation}
    Indeed, 
    \begin{multline}\label{eq_am4567567}
        \int \sum_{y\sim o}\sum_{x \in V(G)} \left| \frac{\I(o \in F_n(G,x))}{ |F_n(G,x)|} - \frac{\I(y \in F_n(G,x))}{ |F_n(G,x)|} \right| d \rho(G,o) = \\
        \int \sum_{y\sim o} \sum_{x\in V(G)} \frac{\I((o,y) \in \partial_E F_n(G,x)) + \I((y,o) \in \partial_E F_n(G,x))}{ |F_n(G,x)|} d \rho(G,o) = \\
        \int \sum_{x \in V(G)} \frac{2|\partial_E F_n(G,o)|}{|F_n(G,o)|} d \rho(G,o).
    \end{multline}
    The last equality holds due to the mass transport principle. As the sequence~$\{F_n\}$ is edge-F\o lner the result of computation~\eqref{eq_am4567567} tends to zero in~$n$. 
    Then we note that the sum 
    $$
    \sum_{y \sim o} \sum_{x \in V(G)} |\lambda_n(G,o,x) - \lambda_n(G,y,x)|
    $$
    is bounded almost surely by~$2\deg o$ as $\lambda_n(G,o, \cdot)$ is a probability measure on~$V(G)$ for every~$o \in V(G)$. Hence, it is enough to show that this sum almost surely converges to~$0$ as the degree function is integrable. We have 
    \begin{multline}
        \sum_{y \sim o} \sum_{x \in V(G)} |\lambda_n(G,o,x) - \lambda_n(G,y,x)| \le \\
        \frac{1}{g_{F_n(G,o)}}\sum_{y \sim o} \sum_{x \in V(G)} |g_{F_n}(G,o)\lambda_n(G,o,x) - g_{F_n} (G,y)\lambda_n(G,y,x)| + \\
        \frac{1}{g_{F_n(G,o)}} \sum_{y \sim o} \left|{g_{F_n}(G,y)}- {g_{F_n}(G,o)}\right| \sum_{x \in V(G)} \lambda_n(G,y,x).
    \end{multline}
    As $\{F_n\}$ is balanced, $g_n(G,o)$ tends to~$1$ almost surely. We showed that the expectation of the first sum converges to~$0$, hence, we can choose a subsequence such that is it converges to~$0$ almost surely. The sum $\sum_{x  \in V(G)} \lambda_n(G, y, x) = 1$, thus, we are left to estimate 
    \begin{multline}
        \sum_{y \sim o} \left|{g_{F_n}(G,y)}- {g_{F_n}(G,o)}\right| \le \\
        \sum_{y\sim o}\sum_{x \in V(G)} \left| \frac{\I(o \in F_n(G,x))}{ |F_n(G,x)|} - \frac{\I(y \in F_n(G,x))}{ |F_n(G,x)|} \right|.
    \end{multline}
    The expected value of this sum tends to~$0$ due to relation~\eqref{eq_amen3} and, therefore, we can again choose a subsequence to obtain a pointwise convergence to~$0$ almost surely.
\end{proof}

\section{Strong localization of eigenfunctions}

\subsection{The main results}

In this section we state and prove our main results on localization of eigenfunctions of finite-range operators as well as several corollaries of interest. 

\begin{theorem}\label{theorem:main}
    Let~$\rho \in \mathcal{U}$ be a weakly $r$-{F\o lner} balanced unimodular random network for some positive integer~$r$ and~$D$ be an operator of finite range~$r$. Suppose that there exists $\lambda\in \C$ such that~$\dim_\rho(E_\lambda^D) > 0$. Then with positive probability the network has a $\lambda$-eigenfunction with finite support.
\end{theorem}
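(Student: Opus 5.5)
Our approach follows the dimension-counting argument of Elek and Lenz--Veseli\'c, adapted through the function~$g_{F_n}$. We argue by contradiction: suppose that $\rho$-almost surely $G$ has no finitely supported $\lambda$-eigenfunction of~$D_G$. Fix a weakly $r$-F\o lner balanced sequence~$\{F_n\}$ with constant~$c$; by Proposition~\ref{proposition:Folner_continuous} we may take it to be local. For a finite set $F\subset V(G)$, consider the restriction map
$$
R_F\colon E_\lambda^D(G) \to l^2(F), \qquad \phi \mapsto \phi|_F .
$$
The key observation is as follows. Since $D$ has range~$r$, the values of $\phi$ on $F\setminus\partial_r F$ are determined, via the eigen-equation on $F\setminus \partial_r F$, only by the values on a set of size $|\partial_r F|$ together with the values on $F$. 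More to the point, if $\phi|_{\partial_r F}=0$, then $\phi\cdot \I_F$ is a finitely supported $\lambda$-eigenfunction. Indeed, $(D_G-\lambda)(\phi \I_F)$ vanishes at points of $F$ at distance more than~$r$ from the complement, by locality, and vanishes at points outside~$F$ at distance more than~$r$ from~$F$. The only remaining points are those near~$\partial_r F$, where $\phi\I_F$ is either $0$ or agrees with~$\phi$, so one needs the boundary layer taken of width $2r$ and a matching convention. Under our assumption, $\phi\I_F=0$. Hence the map $\phi\mapsto \phi|_{\partial_{2r}F}$ is injective on the restrictions $R_F(E_\lambda(G))$, which gives
$$
\dim R_F(E_\lambda(G)) \le |\partial_{2r}F|.
$$

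Next we would relate $\dim_\rho E_\lambda$ to these finite-dimensional quantities. Let $P$ be the projection onto $E_\lambda$ and $Q_F$ the projection onto $l^2(F)$. Then
$$
\sum_{x\in F}\langle P\I_x,\I_x\rangle = \Tr(Q_F P Q_F)\le \operatorname{rank}(Q_FPQ_F)\cdot \|P\|\le \dim R_F(E_\lambda(G)) \le |\partial_{2r}F|.
$$
Dividing by $|F_n(G,o)|$, integrating, and using the mass transport identity~\eqref{eq:avfunction} with $\phi(G,o)=\langle P_G\I_o,\I_o\rangle$, we obtain
$$
\int \langle P_G\I_o,\I_o\rangle\, g_{F_n}\,d\rho \le \int \frac{|\partial_{2r}F_n|}{|F_n|}\,d\rho \to 0 .
$$
The F\o lner property is needed for the $2r$-boundary; we can rearrange the definitions so that the boundary widths match, by working with an $r$-F\o lner sequence and adjusting the argument to use $\partial_r$, via one-sided boundary control. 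By Fatou's lemma together with $\liminf g_{F_n}>c$, this yields $c\,\Tr(P)\le 0$, so $\dim_\rho E_\lambda=0$, which is a contradiction. Thus with positive probability a finitely supported eigenfunction exists. (More precisely, we need the assumption only on a positive-measure set, but a contradiction from the almost sure hypothesis suffices for the statement.)

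The main obstacle is the locality step: making precise that vanishing on a boundary layer of width exactly matching the F\o lner boundary type ($\partial_r$) forces $\phi\I_F$ to be an eigenfunction. The first thing we would try is to define the set to be $F' = F\setminus\partial_r F$ and require vanishing on $\partial_r F$. Then $(D-\lambda)(\phi\I_{F})$ at a point outside $F$ only sees $\phi$ on $\partial_r F$, which is zero, and at points inside $F$ it coincides with $(D-\lambda)\phi=0$, since the missing terms lie outside $F$ at distance $\le r$ from the point and contribute only through values of~$\phi$ outside $F$. We then have to check that these contributions do not spoil the equation for interior points. This is exactly the situation that forces the estimate $\operatorname{rank}\le|\partial_r F|$ rather than $|\partial_{2r}F|$, and we would verify it carefully. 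Measurability of $\phi\mapsto\langle P_G\I_o,\I_o\rangle$ and the equivariance needed for the mass transport step are routine.
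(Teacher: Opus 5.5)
Your overall route is the paper's. You bound the local trace $\sum_{x\in F}\langle P\I_x,\I_x\rangle$ by the dimension of the restriction space $V=\{\phi|_F:\phi\in E_\lambda(G)\}$, and bound $\dim V$ by a boundary count when no finitely supported eigenfunction exists. You then pass to $\int g_{F_n}\langle P\I_o,\I_o\rangle\,d\rho$ by mass transport and conclude with Fatou and $\liminf g_{F_n}>c$. Arguing by contradiction rather than bounding $p_\lambda$ directly is only cosmetic.

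The gap is the boundary count, which you leave open.

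\begin{itemize}
\item Your rigorous version, injectivity of $\phi|_F\mapsto\phi|_{\partial_{2r}F}$, is correct. But it needs $\int|\partial_{2r}F_n|/|F_n|\,d\rho\to0$, which the hypothesis does not give: $\{F_n\}$ is only $r$-F\o lner, there is no degree assumption, and $g_{F_n}$ is bounded only from below. So Proposition~\ref{proposition:rFolnler} cannot be invoked.
\item The version you hope for is false. If $\phi\in E_\lambda(G)$ vanishes on $\partial_rF$, then at $x\in\partial_rF$ one has $\bigl(D_G(\phi\I_F)\bigr)(x)=-\sum_{y\notin F}a(G,x,y)\phi(y)$. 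These are exactly the outside contributions you hoped would not spoil the equation. For instance, take the path on the vertices $1,\dots,5$ with the adjacency operator, $\lambda=0$ and $\phi=(1,0,-1,0,1)$. The set $F=\{1,2\}$ has $\partial F=\{2\}$, where $\phi$ vanishes, yet $\phi\I_F=\I_1$ is not an eigenfunction.
\end{itemize}

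Note that the paper's proof asserts exactly this step: two elements of $E_{\lambda,F_n}(G,o)$ agreeing on $\partial_rF_n$ differ by a finitely supported eigenfunction. So the paper's proof needs the same repair.

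The missing idea is to record the defect on the boundary as well. Consider the linear map
\[
V\to\C^{\partial_rF}\oplus\C^{\partial_rF},\qquad f\mapsto\Bigl(f|_{\partial_rF},\ \bigl((D_G-\lambda)(f\I_F)\bigr)|_{\partial_rF}\Bigr).
\]
For every $f\in V$, $(D_G-\lambda)(f\I_F)$ vanishes on $F\setminus\partial_rF$. This is because the $r$-ball of such a point lies in $F$, where $f\I_F$ agrees with an eigenfunction. If moreover $f|_{\partial_rF}=0$, it also vanishes outside $F$, since every point of $F$ within distance $r$ of $G\setminus F$ lies in $\partial_rF$.

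Hence the kernel of this map consists of finitely supported $\lambda$-eigenfunctions, and
\[
\dim V\le 2|\partial_rF|+\dim\{\psi\in E_\lambda(G):\supp\psi\subset F\}.
\]
Under your contradiction hypothesis this gives $\sum_{x\in F_n}\langle P\I_x,\I_x\rangle\le 2|\partial_rF_n|$. The factor $2$ is harmless, and the $r$-F\o lner property then closes your argument exactly as you wrote it.
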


\begin{corollary}
    If in Theorem~\ref{theorem:main} $\rho$ is extremal then the network has a finitely supported eigenfunction $\rho$-almost surely.
\end{corollary}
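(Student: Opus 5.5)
The corollary will be deduced from Theorem~\ref{theorem:main} by an ergodicity (zero--one) argument. Let $S \subset \Gr$ be the set of rooted networks $(G,o)$ such that $G$ admits a finitely supported $\lambda$-eigenfunction of $D_G$. The first point is that $S$ does not depend on the root. Indeed, $D_G$ does not depend on $o$, because the coefficients $a(G,x,y)$ are defined on $\Grr$ and hence are invariant under changing the root. So if $(G,o)\in S$, then $(G,x)\in S$ for every $x\in V(G)$, and $S$ is a root-invariant set.

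Next we check that $S$ is Borel. A finitely supported eigenfunction has support contained in some ball $B_R(G,o)$. Since $D$ has finite range $r$, and the coefficients depend only on $r$-neighbourhoods, the equation $D_G\phi=\lambda\phi$ for $\phi$ supported in $B_R(G,o)$ is determined by the finitely many coefficients $a(G,x,y)$ with $x\in B_{R+r}(G,o)$ and $y\in B_R(G,o)$. Hence the event
\[
S_R=\{\exists\, \phi\neq 0,\ \supp\phi\subset B_R(G,o),\ D_G\phi=\lambda\phi\}
\]
is the event that a finite matrix, whose entries are measurable functions of $B_{R+2r}(G,o)$, has nontrivial kernel. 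This event is measurable (one can also use continuity of $a$). Finally $S=\bigcup_R S_R$, so $S$ is Borel.

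Now apply the standard fact for unimodular measures. If $\rho$ is extremal in $\U$, then every root-invariant Borel event has probability $0$ or $1$; this is Theorem~4.7 of~\cite{AL07}, already invoked in the paper for the spectrum. The argument behind it: if $0<\rho(S)<1$, then the conditioned measures $\rho(\cdot\mid S)$ and $\rho(\cdot\mid S^c)$ are again unimodular. This holds because the mass-transport principle can be applied to $f\cdot \I_S(G,o)$, and $\I_S(G,o)=\I_S(G,x)$ by root invariance. Thus $\rho$ would be a nontrivial convex combination of two distinct elements of $\U$, contradicting extremality.

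Theorem~\ref{theorem:main} gives $\rho(S)>0$, hence $\rho(S)=1$, which is the claim. The only delicate step is the measurability of $S$. It reduces, via the finite-range assumption, to a countable union of events each determined by a finite neighbourhood of the root, as above.
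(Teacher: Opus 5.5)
Your proposal is correct and follows the same route as the paper: the event of admitting a finitely supported $\lambda$-eigenfunction does not depend on the choice of root, so extremality gives probability $0$ or $1$ by the zero--one law of~\cite{AL07}, and Theorem~\ref{theorem:main} excludes $0$. Your measurability check and your conditioning argument for the zero--one law fill in details that the paper leaves implicit.
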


\begin{remark}
    One can easily show that the reverse implication of Theorem~\ref{theorem:main} always holds if the marks of the random network are taken from a discrete subspace. Suppose that for some $(G,o) \in \supp \rho \subset \mathcal{G}_*$  there exists a finitely supported $\lambda$-eigenfunction~$f$ for the operator~$D$. Then $\dim_\rho^D(E_\lambda) > 0$. Indeed, there is a positive integer~$N$ such that the support of~$f$ is contained in $B_N(G,o)$. The measure of all rooted networks that have $(N+r)$-neighbourhood isomorphic to $B_{N+r}(G,o)$, where~$r$ is the range of~$D$, is positive. Hence, the eigenspace $E_\lambda$ is non-empty with positive probability and, therefore, $\dim_\rho(E_\lambda) > 0$.   
\end{remark}

\begin{proof}[Proof of Theorem~\ref{theorem:main}]
    Let~$A(G,o) \subset V(G)$ be a measurable assignment of a finite subset in~$(G,o)$ and 
    let~$W$ be an { invariant} subspace of~$\mathcal{H}$ of the form 
    $$
    W = \int\limits^{\oplus}_{\Gr} W{(G,o)} d \rho(G,o)  = \int\limits^{\oplus}_{\Gr} W{(G)} d \rho(G,o).
    $$
    Let $P_W = \{P_{W(G)}\} \in \Alg$ be the orthogonal projection onto~$W$. Let us define the following quantity 
    $$
    \dim_A (W) = \int\limits_{\Gr} \frac{\sum_{x \in A(G,o)} \langle P_{W(G)} \mathbb I _x , \mathbb I _x\rangle}{|A(G,o)|} d \rho(G,o).
    $$
    The mass transport principle, more precisely, formula~\eqref{eq:avfunction} applied to the function~$\langle P_{W(G)} \mathbb I _x , \mathbb I _x\rangle$, gives
    $$
    \dim_A(W) = \int g_A(G,o) \langle P_W(G)\mathbb I_o, \mathbb I_o\rangle d \rho(G,o).
    $$
    Note that here we use the fact that~$P_W(G)$ does not depend on the choice of the root in~$G$, that is, the invariance of~$W$.
    
    Let $W_A(G,o)$ be the projection of $W(G)$ to the span $\langle \mathbb I_x \colon x\in A(G,o)\rangle$, that is, the subspace of functions supported on~$A(G,o)$. One has the following inclusion  
    $$
    W{(G)} \subset  W_A(G,o) \oplus \langle \mathbb I_x \colon x\not \in A(G,o)\rangle = Y(G,o).
    $$
    Therefore, 
    \begin{equation}\label{eq_proofthm1_1}
        \frac{\sum_{x \in A(G,o)} \langle P_{W(G)} \mathbb I _x , \mathbb I _x\rangle}{|A(G,o)|} \le 
    \frac{\sum_{x \in A(G,o)} \langle P_{Y(G,o)} \mathbb I _x , \mathbb I _x\rangle}{|A(G,o)|} =
    \frac{\dim W_A (G,o)}{|A(G,o)|}.
    \end{equation}

    Now let~$W$ be the space $E_\lambda$ of $\lambda$-eigenfunctions of~$D$ and~$A$ be an element of an $r$-F\o lner sequence $\{F_n\}_n$.
    Since~$\rho$ is weakly {F\o lner} balanced by assumption, we can choose this sequence such that there exists a constant~$c > 0$ satisfying the following bound almost surely for sufficiently large~$n$
    \begin{equation}\label{eq_proof1}
        g_{F_n}(G,o) > c.
    \end{equation}
    Therefore,
    \begin{multline}\label{eq_proof2}
        \liminf_{n \to \infty} \dim_{F_n} E_\lambda = 
        \liminf_{n \to \infty} \int g_{F_n}(G,o) \langle P_{E_\lambda(G)}\mathbb I_o, \mathbb I_o\rangle d \rho(G,o) \ge \\
        c \cdot \int \langle P_{E_\lambda(G)}\mathbb I_o, \mathbb I_o\rangle d \rho(G,o) = c \Tr(P_{E_\lambda}) = c \cdot \dim_\rho (E_\lambda) > 0.
    \end{multline}
    Here, we also use that~$g_{F_n}$ is non-negative almost everywhere. On the other hand, due to inequality~\eqref{eq_proofthm1_1} we have 
    \begin{equation}\label{eq_finproj}
        \dim_{F_n} E_\lambda \le 
        \int \frac{\dim {E_{\lambda,F_n}}(G,o)}{|F_n(G,o)|} d \rho (G,o),
    \end{equation}
    where $E_{\lambda, F_n}(G,o)$ is the projection of $E_\lambda(G)$ to $\langle \mathbb I_x \colon x\in F_n(G,o)\rangle$. 

    Since~$D$ has range~$r$, the space $E_{\lambda, F_n}(G,o)$ consists of functions~$f$ supported on~$F_n(G,o)$ which satisfy $Df(x) = \lambda f(x)$ for each~$x$ at distance at least~$r$ from the boundary of~$F_n(G,o)$. Suppose that two elements~$f_1$ and~$f_2$ of~$E_{\lambda,F_n}(G,o)$ coincide on $\partial_r F_n(G,o)$. Then their difference $f_1 - f_2$ extended by~$0$ outside~$F_n(G,o)$ is a $\lambda$-eigenfunction of~$D$ with finite support in~$F_n(G,o)$. Hence, either $(G,o)$ has a $\lambda$-eigenfunction with finite support in~$F_n(G,o)$ or 
    $$
    \dim {E_{\lambda,F_n}}(G,o) \le |\partial_r F_n(G,o)|.
    $$
    Let $p_\lambda$ be the probability of the event~$B_\lambda$ that consists of $(G,o)$ that have a $\lambda$-eigenfunction with finite support. Then
    \begin{multline*}
      \int\limits_{\Gr} \frac{\dim {E_{\lambda,F_n}}(G,o)}{|F_n(G,o)|} d \rho (G,o) \le 
    \int\limits_{B_\lambda} 1 d \rho (G,o) + 
    \int\limits_{\Gr \setminus B_\lambda} \frac{|\partial_r F_n(G,o)|}{|F_n(G,o)|} d \rho (G,o) \\ \le
    p_\lambda + \int\limits_{\Gr}\frac{|\partial_r F_n(G,o)|}{|F_n(G,o)|} d \rho(G,o).  
    \end{multline*}
    Together with equations~\eqref{eq_proof2}, \eqref{eq_finproj} this gives
    \begin{equation}\label{eq_proof3}
        c \cdot \dim_\rho (E_\lambda) \le 
        \liminf_{n \to \infty}\dim_{F_n} E_\lambda \le p_\lambda + \liminf_{n \to \infty} \int\frac{|\partial_r F_n(G,o)|}{|F_n(G,o)|} d \rho(G,o).
    \end{equation}
    As the sequence~$\{F_n\}$ is $r$-F\o lner, the last summand in formula~\eqref{eq_proof3} is zero and we conclude that $p_\lambda > 0$. 
    
    In the case of extremal~$\rho$, the conclusion follows as the property of having a $\lambda$-eigenfunction is invariant under unrooted isomorphisms of the network. Hence, the probability of this event is either~$0$ or~$1$ (see~\cite{AL07}) and, as it is positive, it has to be~$1$. 
\end{proof}

    The following are the immediate corollaries of Theorem~\ref{theorem:main}. 
    
\begin{corollary}\label{corollary:algebraic}
    Let~$\rho \in \mathcal{U}$ be weakly $r$-{F\o lner} balanced and~$D$ be an  operator with finite range~$r$ and rational coefficients. Then any~$\lambda$ such that $\dim_\rho(E^D_\lambda) > 0$ is an algebraic number. 
\end{corollary}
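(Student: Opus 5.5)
By Theorem~\ref{theorem:main}, if $\dim_\rho(E^D_\lambda) > 0$ then with positive probability the network $(G,o)$ carries a $\lambda$-eigenfunction $f$ of $D_G$ with finite support. I will fix one such network and one such $f$, and show that $\lambda$ is a root of a nonzero polynomial with rational coefficients, built from a finite matrix with rational entries.

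First I reduce to a finite linear-algebra problem. Let $S=\supp f$, which is finite, and let $T = \{x \in V(G) : d(x,S)\le r\}$; this set is also finite because $G$ is locally finite. Since $D$ has range $r$, the equation $D_G f = \lambda f$ only needs to be checked on $T$: for $x\notin T$ both sides vanish. So the eigen-equation becomes $(M - \lambda J) v = 0$. Here $v=f|_S \neq 0$, $M$ is the $|T|\times|S|$ matrix with entries $a(G,x,y)$ for $x\in T$, $y\in S$, and $J$ is the $|T|\times |S|$ inclusion matrix, with $J_{x,y}=1$ if $x=y$ and $0$ otherwise. By the hypothesis of rational coefficients, $M$ has rational entries.

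Next I remove the rows outside $S$. The rows of the system indexed by $x\in T\setminus S$ read $\sum_{y\in S} a(G,x,y)v(y)=0$, and these do not involve $\lambda$. The rows indexed by $x\in S$ read $M_S v=\lambda v$, where $M_S$ is the rational $|S|\times|S|$ block of $M$ on $S\times S$. Since $v\neq 0$, $\lambda$ is an eigenvalue of $M_S$. Therefore $\lambda$ is a root of $\det(M_S - t I)$, which is a monic polynomial with rational coefficients. Hence $\lambda$ is algebraic.

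The main obstacle is interpretive: the meaning of ``rational coefficients''. The natural reading is that $a$ takes values in $\mathbf{Q}$, possibly up to a common rational scaling, and under that reading the argument above is complete. If instead $a$ were only required to lie in some field $K$, the same argument would give that $\lambda$ is algebraic over $K$. Beyond this point, everything rests on Theorem~\ref{theorem:main}; the only inputs are finite range and local finiteness, which make $T$ finite and ensure no equations are lost outside $T$.
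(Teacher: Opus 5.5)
Your proof is correct and is essentially the argument the paper intends: it lists this as an immediate consequence of Theorem~\ref{theorem:main}, and the implicit step is exactly that a finitely supported $\lambda$-eigenfunction $f$ makes $\lambda$ an eigenvalue of the finite rational matrix $(a(G,x,y))_{x,y\in\supp f}$. The detour through $T$ and the rows indexed by $T\setminus S$ is harmless but unnecessary, since only the rows indexed by $S$ are used.
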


\begin{corollary}\label{corollary:sofic1}
    Suppose that the mark space~$\Xi$ is discrete and let a sequence of networks~$\{\rho_n\}$  converge in~$\U$ to a weakly $r$-{F\o lner} balanced $\rho \in \mathcal{U}$. And let~$D$ be an operator with finite range~$r$ such that $\dim_\rho (E^D_\lambda) > 0$ for some $\lambda \in \mathbb \C$. Then for all large enough~$n$ the network $\rho_n$ has a finitely supported $\lambda$-eigenfunction for~$D$ with positive probability and, in particular, $\dim_{\rho_n}(E^D_\lambda)> 0$. 
\end{corollary}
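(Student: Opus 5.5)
By Theorem~\ref{theorem:main}, the set $B_\lambda$ of networks $(G,o)$ carrying a finitely supported $\lambda$-eigenfunction of $D$ has positive $\rho$-measure. The plan is to find a cylinder event (a fixed finite ball) of positive $\rho$-measure that forces such an eigenfunction, and then use weak convergence to transfer positivity of this event to $\rho_n$. The remark following Theorem~\ref{theorem:main} then gives $\dim_{\rho_n}(E^D_\lambda)>0$.

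\textbf{Step 1: reduce to a countable union of cylinder events.} Fix $(G,o)\in B_\lambda$ and a finitely supported $\lambda$-eigenfunction $f$ on it. Since $\supp f$ is finite, it lies in some ball $B_N(G,o)$. Because $D$ has finite range $r$ and the coefficients $a(G,x,y)$ depend only on $B_r(G,x)$, the eigenvalue equation $D_Gf=\lambda f$ at every vertex depends only on $B_{N+r}(G,o)$. More precisely, it involves only vertices $x$ within distance $r$ of $\supp f$, whose $r$-balls sit inside $B_{N+2r}(G,o)$. Hence any network $(G',o')$ with $B_{N+2r}(G',o')\cong B_{N+2r}(G,o)$ as rooted marked graphs also has a finitely supported $\lambda$-eigenfunction, namely $f$ transported by the isomorphism.

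Since the marks are discrete and the graphs are locally finite, there are only countably many isomorphism classes of finite rooted balls. So $B_\lambda$ is the union of the countably many cylinder sets
$$C_{N,\beta}=\{(G',o') : B_{N+2r}(G',o')\cong\beta\},$$
where $\beta$ ranges over those balls that support such an eigenfunction. Countable subadditivity and $\rho(B_\lambda)>0$ then give a single pair $(N,\beta)$ with $\rho(C_{N,\beta})>0$.

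\textbf{Step 2: pass to the limit.} With discrete marks, each cylinder set $C_{N,\beta}$ is both open and closed in $\Gr$, so its indicator function is bounded and continuous. Weak-$*$ convergence $\rho_n\to\rho$ therefore gives $\rho_n(C_{N,\beta})\to\rho(C_{N,\beta})>0$. Hence $\rho_n(C_{N,\beta})>0$ for all large $n$, and on $C_{N,\beta}$ the network has a finitely supported $\lambda$-eigenfunction.

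\textbf{Step 3: conclude positive dimension.} Apply the remark following Theorem~\ref{theorem:main} directly. If a finitely supported eigenfunction $f$ exists on a set of positive $\rho_n$-measure, then $E_\lambda(G)\neq 0$ on that set, so $E^D_\lambda\neq0$ in $\Hs$. By faithfulness of the trace, $\dim_{\rho_n}(E^D_\lambda)>0$.

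\textbf{Main obstacle.} The delicate point is the locality in Step~1, which uses the finite-dependence assumption on $a$ (this is the use flagged in the footnote). The work is in checking that the radius $N+2r$ suffices, i.e.\ that both the transported function and the coefficients $a$ agree on the copied ball. The remaining steps are routine measure theory; they rely on discreteness of the marks to make cylinder sets clopen, since otherwise the portmanteau theorem would give only a lower bound on open sets, which would in fact still suffice.
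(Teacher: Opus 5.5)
Your proof is correct and follows the route the paper intends when it calls this an immediate corollary (the paper gives no written proof). Theorem~\ref{theorem:main} gives $\rho(B_\lambda)>0$; a clopen cylinder event of positive $\rho$-mass then forces a finitely supported eigenfunction; weak convergence transfers positivity of that event to $\rho_n$; and the Remark after Theorem~\ref{theorem:main} yields $\dim_{\rho_n}(E^D_\lambda)>0$. Your radius $N+2r$ is more careful than the $N+r$ used in that Remark, since the coefficients at vertices within distance $N+r$ of the root depend on their own $r$-balls.
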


For (strongly) {F\o lner} balanced unimodular random networks, we can prove a refined version of Theorem~\ref{theorem:main}. We will show that not only finitely supported eigenfunctions exist but that the average dimension of the subspace of $\lambda$-eigenfunctions with finite support inside a F\o lner set converges to the value of~$\dim_\rho E_\lambda$. 
\begin{theorem}\label{theorem:balanced}
    Suppose that~$\rho$ is $r$-{F\o lner} balanced and $F_n(G,o)$ is a balanced $r$-F\o lner sequence. Let~$D$ be an operator of finite range~$r$. Then 
    $$
    \dim_\rho(E_\lambda) = \lim_{n \to \infty}\int \frac{ \dim \langle f \in E_\lambda(G) \mid \supp f \subset F_n(G,o)\rangle}{|F_n(G,o)|} d\rho(G,o).
    $$
\end{theorem}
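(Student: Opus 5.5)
We establish the equality by proving matching upper and lower bounds for the integrals
$$
a_n = \int \frac{\dim L_n(G,o)}{|F_n(G,o)|}\,d\rho, \qquad L_n(G,o) = \langle f \in E_\lambda(G) \mid \supp f \subset F_n(G,o)\rangle,
$$
comparing both with $\dim_{F_n}E_\lambda$ from the proof of Theorem~\ref{theorem:main}. Balancedness gives $g_{F_n}\to 1$ almost surely, and Proposition~\ref{proposition:Folner_continuous} lets us assume each $F_n$ is local. The first step is to show $\dim_{F_n}E_\lambda = \int g_{F_n}\langle P_{E_\lambda}\I_o,\I_o\rangle d\rho \to \dim_\rho(E_\lambda)$. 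The integrand converges pointwise, but since $g_{F_n}$ need not be uniformly bounded, dominated convergence does not apply directly. Instead we use Scheffé's lemma: $g_{F_n}\ge 0$, $\int g_{F_n}=1$, and $g_{F_n}\to 1$ almost surely together imply $g_{F_n}\to 1$ in $L^1(\rho)$. Because $0\le\langle P\I_o,\I_o\rangle\le 1$, the convergence follows.

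\textbf{Upper bound.} Here we refine the argument of Theorem~\ref{theorem:main}. Let $E_{\lambda,F_n}(G,o)$ be the projection of $E_\lambda(G)$ onto functions supported on $F_n(G,o)$, and let $R: E_{\lambda,F_n}(G,o)\to \C^{\partial_r F_n(G,o)}$ be restriction. Its kernel consists of elements vanishing on $\partial_r F_n$. Extended by zero, each such element satisfies $Df=\lambda f$ at every vertex: at points of $F_n$ away from the boundary by the eigen-equation (range $r$), at $\partial_r F_n$ and outside $F_n$ because $D_G$ of the projection of a genuine eigenfunction $h$ agrees there with $D_G h=\lambda h$ plus terms that vanish. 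Checking this last step carefully is the main technical point. I would rather handle it by noting that $E_{\lambda,F_n}$ is contained in the space $V_n$ of functions on $F_n$ satisfying the eigen-equation on $F_n\setminus\partial_r F_n$; the kernel of restriction to $\partial_r F_n$, or a slightly thicker boundary $\partial_{2r}F_n$, on $V_n$ lies in $L_n$. Rank–nullity then gives $\dim E_{\lambda,F_n}\le \dim L_n + |\partial_{2r}F_n|$. Combined with inequality~\eqref{eq_finproj}, this yields $\dim_{F_n}E_\lambda \le a_n + \int |\partial_{2r}F_n|/|F_n|$. If the thicker boundary is genuinely needed, we get the $2r$-F\o lner property from the $r$-F\o lner one by the induction in Proposition~\ref{proposition:rFolnler}, using $g_{F_n}\to1$ in $L^1$; otherwise we keep $\partial_r$. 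Hence $\dim_\rho(E_\lambda)\le\liminf a_n$.

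\textbf{Lower bound.} Let $L(G)$ be the closed span of all finitely supported $\lambda$-eigenfunctions of $G$. It depends only on $G$, so $L=\int^\oplus L(G)$ is an invariant subspace of $E_\lambda$ and $\dim_\rho L\le\dim_\rho E_\lambda$. Since $L_n(G,o)\subset L(G)$ and is supported in $F_n$, we have
$$
\dim L_n = \sum_{x\in F_n}\langle P_{L_n}\I_x,\I_x\rangle \le \sum_{x\in F_n}\langle P_{L(G)}\I_x,\I_x\rangle .
$$
Dividing by $|F_n|$, integrating, and applying \eqref{eq:avfunction} gives $a_n\le \int g_{F_n}\langle P_L\I_o,\I_o\rangle d\rho \to \dim_\rho L\le \dim_\rho E_\lambda$, again by the $L^1$ convergence of $g_{F_n}$. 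Therefore $\limsup a_n\le \dim_\rho(E_\lambda)$, which together with the upper bound proves the theorem.

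The main obstacle is the boundary bookkeeping in the upper bound: showing that an element of the projected eigenspace that vanishes near the boundary really extends to a global finitely supported eigenfunction. I would try to resolve it by working with $V_n$ and a $2r$-boundary.
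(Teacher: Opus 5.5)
Your overall route is the paper's, and the outer steps are fine: Scheff\'e's lemma for $\dim_{F_n}E_\lambda\to\dim_\rho(E_\lambda)$ (the paper uses exactly this without naming it), the projected space $E_{\lambda,F_n}$ plus a boundary count for $\dim_\rho(E_\lambda)\le\liminf a_n$, and $P_{L_n}\le P_{E_\lambda(G)}$ for the reverse inequality. The gap is the boundary count.

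\textbf{Your first formulation fails.} If $h\in E_\lambda(G)$ vanishes on $\partial_rF_n$, then at $x\in\partial_rF_n$ one has $D_G(h\I_{F_n})(x)=-\sum_{y\notin F_n}a(G,x,y)h(y)$, and this need not be $0$. Take the adjacency operator on the path $a\sim v\sim b$, with $\lambda=0$, $h=\I_a-\I_b$ and $F=\{a,v\}$. Then $h\I_F=\I_a$ vanishes on $\partial_1F=\{v\}$, yet $A_G\I_a(v)=1$. The paper's one-line justification of~\eqref{eq4589456907} makes the same claim, so your suspicion was well placed.

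\textbf{Your fallback is incomplete as justified.} Restricting $V_n$ to $\partial_{2r}F_n$ is correct linear algebra, but it leaves you needing $\int|\partial_{2r}F_n|/|F_n|\,d\rho\to0$. Proposition~\ref{proposition:rFolnler} does not supply this: it assumes finite expected degree and a uniform bound $g_{F_n}<C$, and the theorem assumes neither. The $2r$-F\o lner property does in fact hold for balanced sequences. Proving it needs a truncation your proposal does not contain: split according to whether some $y\in B_r(o)$ has $|B_r(y)|>K$, use $g_{F_n}\to1$ in $L^1$ together with local finiteness on that event, and use mass transport, at the cost of a factor $K$, on its complement.

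\textbf{A simpler repair.} Stay with $\partial_r$ and impose twice as many conditions. On $V_n$, with $f$ extended by zero, consider the map
$$f\mapsto\bigl(f|_{\partial_rF_n},\,((D_G-\lambda)f)|_{\partial_rF_n}\bigr)\in\C^{2|\partial_rF_n|}.$$
On its kernel, $(D_G-\lambda)f$ vanishes
\begin{itemize}
\item on $F_n\setminus\partial_rF_n$, by definition of $V_n$, since $B_r(x)\subset F_n$ there;
\item on $\partial_rF_n$, by assumption;
\item at every $x\notin F_n$, because $B_r(x)\cap F_n\subset\partial_rF_n$, where $f=0$.
\end{itemize}
So the kernel lies in $L_n$. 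Since $E_{\lambda,F_n}\subset V_n$, this gives $\dim E_{\lambda,F_n}\le\dim L_n+2|\partial_rF_n|$. The factor $2$ is harmless under the $r$-F\o lner hypothesis, and with this replacement the rest of your argument goes through unchanged.
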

\begin{proof}
    We will follow the proof of Theorem~\ref{theorem:main} with certain changes. Instead of inequality~\eqref{eq_proof1}, we now have for $\rho$-almost every rooted network~$(G,o)$
    $$
    \lim_{n \to \infty} g_{F_n}(G,o) = 1.
    $$
    Then the inequality~\eqref{eq_proof2} becomes
    \begin{multline}\label{eq45986009457}
        \lim_{n\to \infty} \dim_{F_n} E_\lambda = 
         \lim_{n\to \infty} \int g_{F_n}(G,o)\langle P_{E_\lambda(G)}\mathbb I_o, \mathbb I_o\rangle d \rho(G,o) = \\
        \int \langle P_{E_\lambda(G)}\mathbb I_o, \mathbb I_o\rangle d \rho(G,o) = 
    \dim_\rho (E_\lambda).
    \end{multline}
    Here we use that $g_{F_n}$ is non negative for every~$n$ with~$||g_{F_n}||_{1} =1$, and the function $\langle P_{E_\lambda(G)}\mathbb I_o, \mathbb I_o\rangle$ is non-negative as well and bounded from above by~$1$. 
    Let us then estimate the dimension of $E_{\lambda,F_n}(G,o)$. Let $E_{\lambda, F_n}^*(G,o) \le E_{\lambda,F_n}(G,o)$ be the subspace of $\lambda$-eigenfunctions with finite support inside~$F_n(G,o)$. The difference of any two functions in~$E_{\lambda,F_n}(G,o)$ that coincide on $\partial_r F_n$ belongs to $E_{\lambda, F_n}^*(G,o)$. Again, we extend this difference by zero outside~$F_n(G,o)$. Therefore, 
    \begin{equation}\label{eq4589456907}
        \dim E_{\lambda,F_n}(G,o) \le \dim
     E_{\lambda, F_n}^*(G,o) + |\partial_r F_n|.
    \end{equation}
    Combining relations~\eqref{eq_finproj}, \eqref{eq45986009457} and~\eqref{eq4589456907} we obtain 
    \begin{multline}
        \dim_\rho(E_\lambda) = 
        \lim_{n\to \infty} \dim_{F_n} E_\lambda \le 
        \lim_{n\to \infty} \int \frac{\dim {E_{\lambda,F_n}}(G,o)}{|F_n(G,o)|} d \rho (G,o) \le \\
        \lim_{n\to \infty} \int \frac{ \dim  E_{\lambda, F_n}^*(G,o)}{|F_n(G,o)|} d\rho(G,o), 
    \end{multline}
    where we use the $r$-F\o lner condition 
    $$
    \lim_{n\to\infty} \int \frac{|\partial_r F_n(G,o)|}{|F_n(G,o)|} d\rho(G,o) = 0
    $$
    to obtain the last inequality.
    
    Now we show the inequality in the other direction. By definition, $  E_\lambda(G) \ge E_{\lambda, F_n}^*(G,o)$ and, hence,
    \begin{multline}
        \dim_{F_n} E_\lambda = 
        \int \frac{\sum_{x \in F_n(G,o)}\langle P_{E_\lambda(G)} \mathbb{I}_x,\mathbb{I}_x \rangle}{|F_n(G,o)|}d\rho(G,o) \ge \\
        \int \frac{\sum_{x \in F_n(G,o)}\langle P_{E^*_{\lambda, F_n}(G,o)} \mathbb{I}_x,\mathbb{I}_x \rangle}{|F_n(G,o)|}d\rho(G,o) =
        \int \frac{\dim E^*_{\lambda, F_n}(G,o) }{|F_n(G,o)|}d\rho(G,o)
    \end{multline}
    Using relation~\eqref{eq45986009457} and the fact that 
    $$
    \dim E_{\lambda, F_n}^*(G,o) = \dim
    \langle f \in E_\lambda(G) \mid \supp f \subset F_n(G,o)\rangle, 
    $$
    we obtain the reverse inequality. 
\end{proof}

The following result was obtained in~\cite{ATV} for adjacency operators on graphs of bounded degree. We give an alternative poof of this result under the condition that the limiting measure is {F\o lner} balanced. Although our proof is not applicable in the non-amenable case, for {F\o lner} balanced graphs, it is more general, as we do not need the degree of the graphs and the coefficients of the operator to be bounded. In fact, we do not require any conditions on the coefficients of the operator~$D$ except the finite range. 
\begin{corollary}\label{corollary:approximation}
    Let~$D$ be an operator of finite range~$r$. If a sequence of networks~$\rho_n$ with discrete marks converges in~$\mathcal{U}$ to an $r$-{F\o lner} balanced~$\rho$, then 
    $$
    \lim_{n \to \infty}\dim_{\rho_n}(E_\lambda^D) = \dim_\rho (E_\lambda^D).
    $$
\end{corollary}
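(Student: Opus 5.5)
The plan is to prove the two inequalities $\limsup_n \dim_{\rho_n}(E_\lambda)\le \dim_\rho(E_\lambda)$ and $\liminf_n \dim_{\rho_n}(E_\lambda)\ge \dim_\rho(E_\lambda)$ separately. Both will rest on Theorem~\ref{theorem:balanced} applied to $\rho$, with a balanced $r$-F\o lner sequence $\{F_k\}$ chosen by Proposition~\ref{proposition:Folner_continuous}. Then $F_k(G,o)\subset B_{d_k}(G,o)$ depends only on $B_{d_k}(G,o)$, so $F_k$ is defined for every network, including those in the support of $\rho_n$.

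\textbf{Lower bound.} Fix $k$ and set
$$\psi_k(G,o)=\frac{\dim\langle f\in E_\lambda(G)\mid \supp f\subset F_k(G,o)\rangle}{|F_k(G,o)|}\in[0,1].$$
Since $D$ has finite range $r$ and its coefficients depend only on $r$-balls, whether a function supported in $F_k\subset B_{d_k}$ is a $\lambda$-eigenfunction is decided by $B_{d_k+2r}(G,o)$. Hence $\psi_k$ is a bounded function depending only on a finite ball, and with discrete marks it is continuous. Weak convergence therefore gives $\int\psi_k\,d\rho_n\to\int\psi_k\,d\rho$.

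Next I would show that for any unimodular $\rho'$ and any assignment $A$, $\dim_{\rho'}E_\lambda\ge \int \dim E^*_{\lambda,A}/|A|\,d\rho'$. This is the computation at the end of the proof of Theorem~\ref{theorem:balanced}, with one change: one no longer passes to a limit, but instead uses the identity $\dim_{A}W=\int g_A\langle P_W\I_o,\I_o\rangle$. If the expected bound $g_{F_k}\le 1+\eps$ can be transferred to $\rho_n$, this yields $\dim_{\rho_n}\ge \int\psi_k d\rho_n/(1+\eps)$. The transfer is delicate, since $g_{F_k}$ is only close to $1$ almost surely. I would handle it differently: replace $g$ by a truncation. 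On $\rho$, the function $g_{F_k}$ is continuous and depends on $2d_k$-balls, so $\int \min(g_{F_k},M)\phi\,d\rho_n$-type quantities converge. To avoid needing $g\le C$, I would estimate $\int g_{F_k}h\,d\rho_n$ with $h=\langle P\I_o,\I_o\rangle\le 1$ as follows. The error $\int (g_{F_k}-1)^+ h\,d\rho_n\le\int|g_{F_k}-1|\,d\rho_n$ is a continuous-in-$\rho_n$ quantity, except that $g$ is unbounded. However $\int g\,d\rho_n=1=\int g\,d\rho$, and $\int\min(g,M)\,d\rho_n\to\int\min(g,M)\,d\rho$, so uniform integrability follows. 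Hence $\int|g_{F_k}-1|\,d\rho_n\to\int|g_{F_k}-1|\,d\rho$, which tends to $0$ in $k$ by balancedness and dominated convergence ($L^1$ convergence holds by Scheffé). Combining, $\liminf_n\dim_{\rho_n}E_\lambda\ge\int\psi_k\,d\rho-\delta_k$ with $\delta_k\to0$, and Theorem~\ref{theorem:balanced} then gives the lower bound.

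\textbf{Upper bound.} Use inequality~\eqref{eq_proofthm1_1} and estimate~\eqref{eq4589456907} on $\rho_n$:
$$\int g_{F_k}\langle P_{E_\lambda}\I_o,\I_o\rangle\,d\rho_n\le\int\psi_k\,d\rho_n+\int\frac{|\partial_rF_k|}{|F_k|}\,d\rho_n.$$
Both terms on the right are bounded continuous functions of finite balls, so they converge to the corresponding $\rho$-integrals. The left side differs from $\dim_{\rho_n}E_\lambda$ by at most $\int|g_{F_k}-1|\,d\rho_n$, which is controlled as above. Letting $n\to\infty$ and then $k\to\infty$, and applying the $r$-F\o lner property together with Theorem~\ref{theorem:balanced}, gives $\limsup_n\dim_{\rho_n}E_\lambda\le\dim_\rho E_\lambda$.

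The main obstacle is this transfer of the balance condition $g_{F_k}\approx1$ from $\rho$ to $\rho_n$. It requires that $g_{F_k}$ be a local function, so that it makes sense on $\rho_n$, and that the $L^1$ closeness pass to the limit via the unit mass identity $\int g\,d\rho_n=1$ and uniform integrability.
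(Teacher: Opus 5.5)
Your proposal is correct and is essentially the paper's proof. It uses the same two one-sided inequalities, the same local F\o lner sets from Proposition~\ref{proposition:Folner_continuous}, weak convergence of the bounded local quantities $\int\psi_k\,d\rho_n$ and $\int|\partial_rF_k|/|F_k|\,d\rho_n$, the sandwich $\int\psi_k\,d\rho_n\le\dim_{F_k,\rho_n}E_\lambda\le\int\psi_k\,d\rho_n+\int|\partial_rF_k|/|F_k|\,d\rho_n$, and Theorem~\ref{theorem:balanced}. The only real difference is how you compare $\dim_{F_k,\rho_n}E_\lambda=\int g_{F_k}\langle P_{E_\lambda}\I_o,\I_o\rangle\,d\rho_n$ with $\Tr_{\rho_n}(P_{E_\lambda})$. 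You absorb the gap into the single error $\int|g_{F_k}-1|\,d\rho_n$, which tends to $\int|g_{F_k}-1|\,d\rho$ as $n\to\infty$ by the unit-mass identity, and then to $0$ as $k\to\infty$ by Scheff\'e. The paper instead uses one-sided truncations at $1\pm\eps$ with multiplicative factors $1\pm\eps$; your version is slightly cleaner.

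One slip: the unweighted inequality $\dim_{\rho'}E_\lambda\ge\int\dim E^*_{\lambda,A}/|A|\,d\rho'$ that you first announce is false for general $A$. What holds is the $g_A$-weighted version $\dim_{A,\rho'}E_\lambda\ge\int\dim E^*_{\lambda,A}/|A|\,d\rho'$. Your final chain uses only this weighted version, so the argument is unaffected.
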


\begin{proof}
    As we will deal with different unimodular measures $\rho \in \U$, we will specify in our arguments the notation $\Tr_\rho$, $\dim_\rho$, and $\dim_{A,\rho}$ to distinguish between different~$\rho$. Here, as usual, $A$~is a measurable assignment of finite subsets inside a rooted network.  
    
    Due to Proposition~\ref{proposition:Folner_continuous} we can assume that there exists an $r$-F\o lner balanced sequence~$\{F_n\}$ which is continuous for every~$n$, that is, there exists~$r_n$ such that the subset of $F_n(G,o)\subset B_{r_n}(G,o)$ depends only on the~$B_{r_n}(G,o)$. 
    As we assumed that the marks of the network are given from a discrete space, we have only countably many distinct values of~$B_{r_n}$ and, hence, for every~$N$
    \begin{equation}\label{eq_appr0}
        \lim_{n\to \infty}\int\frac{ \dim E^*_{\lambda, F_N}(G,o)}{|F_N(G,o)|} d \rho_n(G,o) = \\
        \int\frac{ \dim E^*_{\lambda, F_N}(G,o)}{|F_N(G,o)|} d \rho(G,o)
    \end{equation}
    Recall that $E^*_{\lambda, F_N}(G,o)$ is the space of~$\lambda$-eigenfunctions with finite support in~$F_N(G, o)$. As $E^*_{\lambda, F_N}(G,o)$ is a subspace of ~$E_\lambda(G)$ we have 
    \begin{multline}\label{eq948675}
        \int\frac{ \dim E^*_{\lambda, F_N}(G,o)}{|F_N(G,o)|} d \rho_n(G,o) \le 
        \int\frac{ \sum_{x \in F_N(G,o)} \langle P_{E_\lambda(G)}\mathbb I_x, \mathbb I_x \rangle}{|F_N(G,o)|} d \rho_n(G,o) 
        = \\ \dim_{F_N, \rho_n} E_\lambda = 
        \int \langle P_{E_\lambda}(G)\mathbb I_o, \mathbb I_o \rangle g_{F_N}(G,o)  d\rho_n(G,o).
    \end{multline}
    The value of the second factor~$g_{F_N}(G,o)$ only depends on the $B_{2r_N}(G,o)$ by formula~\eqref{definition:g_function}, in particular, it attains only countably many values. Hence, for large enough~$n$, the $\rho_n$-distribution of~$g_{F_N}(G,o)$ is close to its~$\rho$ distribution. Let~$\eps$ be a positive constant and $\delta_N = \delta_N(\eps)$ be the value of
    $$
     \int\limits_{g_{F_N} > 1 + \eps} g_{F_N}(G,o) d \rho(G,o).
    $$
    Then
    \begin{multline}\label{eq_467687924}
        \int \langle P_{E_\lambda}(G)\mathbb I_o, \mathbb I_o \rangle g_{F_N}(G,o)  d\rho_n(G,o) \le \\
        \int\limits_{g_{F_N} \le 1 + \eps} \langle P_{E_\lambda}(G)\mathbb I_o, \mathbb I_o \rangle (1 + \eps)  d\rho_n(G,o)  + 
        \int\limits_{g_{F_N} > 1 + \eps} 1 \cdot g_{F_N}(G,o)  d\rho_n(G,o) = \\
        \Tr_{\rho_n}(P_{E_\lambda}) (1 + \eps) + 
        \int\limits_{g_{F_N} > 1 + \eps} (g_{F_N}(G,o) - 1- \eps) d\rho_n(G,o)
    \end{multline}
    We claim that for sufficiently large~$n$
    \begin{equation}\label{eq_455725}
        \int\limits_{g_{F_N} > 1 + \eps} (g_{F_N}(G,o) - 1- \eps ) d\rho_n(G,o)
        \le  2\delta_N.
    \end{equation}
    Although~$g_{F_N}$ is continuous on~$\Gr$, it does not need to be bounded, and the claim is not automatic. We resolve this by taking the function 
    $$
    \bar g_{F_N}(G,o) = \min \{ g_{F_N}(G,o), 1 + \eps\}
    $$
    which is bounded and still continuous. Recall that $\int g_{F_N} d\rho_n = \int g_{F_N} d\rho = 1$ for all~$n$ and~$N$. Due to the weak convergence of~$\rho_n$ to~$\rho$, we have for~$n$ large enough 
    $$
    \int  \bar g_{F_N} d \rho_n \ge \int  \bar g_{F_N} d\rho - \delta_N \ge \int\limits_{g_{F_N} \le 1 + \eps}  g_{F_N} d\rho - \delta_N = 1 - 2\delta_N.
    $$
    Therefore,
    $$
    \int\limits_{g_{F_N} > 1 + \eps} (g_{F_N}(G,o) - 1- \eps ) d\rho_n(G,o) = 1 -  \int  \bar g_{F_N} d \rho_n \le 2 \delta_N.
    $$
    Combining inequalities~\eqref{eq948675}, \eqref{eq_467687924}, and~\eqref{eq_455725} we obtain for every~$N$
    \begin{multline}
        \liminf \dim_{\rho_n}(E_\lambda)  = \liminf \Tr_{\rho_n}(P_{E_\lambda}) \ge \\
        -\frac{2\delta_N}{1 + \eps} + \frac{1}{1+\eps}\int\frac{ \dim E^*_{\lambda, F_N}(G,o)}{|F_N(G,o)|} d \rho(G,o)
    \end{multline}
    Now, when $N$ tends to infinity $\delta_N= \delta_N(\eps)$ goes to zero due to the {F\o lner} balancedness of~$\rho$. As~$\eps$ is arbitrary, we conclude that 
    $$
    \liminf \dim_{\rho_n}(E_\lambda) \ge \dim_\rho(E_\lambda).
    $$
    
    We are left to prove the inequality in the opposite direction. Note, that if~$D$ is a self-adjoint operator then the conclusion follows automatically from the weak convergence of expected spectral measures. To treat the general case we will use the arguments similar to the ones above. Let $\tau_N$ be the following value 
    $$
    \tau_N =\int \frac{|\partial_r F_N(G,o)|}{|F_N(G,o)|} d \rho (G,o).
    $$
    So $\lim_{n \to \infty} \tau_n = 0$ due to $r$-F\o lner property. And for all sufficiently large~$n$, $\int {|\partial_r F_N|}/{|F_N|} d \rho_n < 2 \tau_N$ due to continuity of~$F_N$. Then we have from inequality~\eqref{eq_proofthm1_1}
    \begin{multline}\label{eq_appr1}
        \dim_{F_N,\rho_n} E_\lambda \le 
        \int \frac{\dim E_{\lambda, F_N}(G,o)}{|F_N(G,o)|} d \rho_n \le \\
        \int \frac{\dim E^*_{\lambda, F_N}(G,o)}{|F_N(G,o)|} d \rho_n + \int \frac{|\partial_r F_N|}{|F_N|} d \rho_n \le \int \frac{\dim E^*_{\lambda, F_N}(G,o)}{|F_N(G,o)|} d \rho_n + 2 \tau_N
    \end{multline}
    for sufficiently large~$n$. Then we can analyse 
    $$
    \dim_{F_N, \rho_n} E_\lambda = \int \langle P_{E_\lambda}(G)\mathbb I_o, \mathbb I_o \rangle g_{F_N}(G,o)  d\rho_n(G,o)
    $$
    in a similar manner as before. Let~$\eps$ be a positive constant and let $\tilde \delta_N = \tilde \delta_N (\eps)$ be the measure~$\rho$ of networks~$(G,o)$ such that $g_{F_N}(G,o) \le 1 - \eps$. 
    \begin{multline}\label{eq_appr2}
        \int \langle P_{E_\lambda}(G)\mathbb I_o, \mathbb I_o \rangle g_{F_N}(G,o)  d\rho_n(G,o) \ge \\
        \Tr_{\rho_n}(P_{E_\lambda}) (1 - \eps) - 
        \rho_n\left(\{ g_{F_N} \le 1 - \eps\}\right) \ge \Tr_{\rho_n}(P_{E_\lambda}) (1 - \eps) - 2\tilde\delta_N 
    \end{multline}
    for all sufficiently large~$n$. Inequalities~\eqref{eq_appr1} and~\eqref{eq_appr2} together give
    \begin{multline}
        \limsup_{n \to \infty} \dim_{\rho_n}(E_\lambda)  = \limsup \Tr_{\rho_n}(P_{E_\lambda}) \le \\
        \frac{2\tau_N + 2 \tilde \delta_N}{1 - \eps} + \frac{1}{1 - \eps} \int \frac{\dim E^*_{\lambda, F_N}(G,o)}{|F_N(G,o)|} d \rho_n.
    \end{multline}
    The constants $\tau_N$ and $\tilde\delta_N(\eps)$ tend to zero in~$N$ and~$\eps$ is arbitrary. Relation~\eqref{eq_appr0} and Theorem~\ref{theorem:balanced} give 
    $$
    \limsup_{n \to \infty} \dim_{\rho_n}(E_\lambda) \le \dim_\rho (E_\lambda).
    $$
    Hence, the limit of $\dim_{\rho_n}(E_\lambda)$ exists and is equal to~$\dim_\rho (E_\lambda)$.
\end{proof}

For self-adjoint finite-range operators with integer coefficients, the pointwise convergence of densities of states in the Benjamini-Schramm limits was shown in~\cite{ATV, bordenave2017spectrum}. This implies that the eigenvalues of such operators on sofic random rooted networks are algebraic numbers. These results should be compared with Corollaries~\ref{corollary:algebraic},~\ref{corollary:sofic1}, and~\ref{corollary:approximation} in the present paper.  Although we deal only with amenable networks, our approach has the advantage that it does not require any condition on the coefficients of the operator~$D$ except the finite range and includes operators that are not necessarily self-adjoint or normal. Moreover, it does not rely on the weak convergence of the densities of states. Contrary to that, the methods of~\cite{ATV, bordenave2017spectrum} are based on refining the weak convergence of empirical spectral measures for a Benjamini-Schramm convergent sequence of finite graphs. For a sofic group~$\Gamma$, it was shown in~\cite{JZ, Thom} that the eigenvalues of any element of the group algebra~$\C\Gamma$ or, more generally, any matrix over~$\C\Gamma$, are algebraic over its coefficients. It is unknown to the author whether the conclusion of Corollaries~\ref{corollary:approximation} and~\ref{corollary:algebraic} holds in the context of general finite-range operators on sofic unimodular random networks.

The last corollary that we address in this section states the existence of a basis consisting if finitely supported eigenfunctions for every eigenspace of a finite-range operator. We say that a function $f \in \mathcal{H}$ has finite support if for almost every~$(G,o)$ the section $f_{G,o}$ has finite support. Any such function can be approximated in $\mathcal{H}$ by functions with finite  $\esssup_{G,o} |\supp f_{G,o}|$, that is, the functions with leaf-wise finite support of uniformly bounded size. 

\begin{corollary}\label{corollary:basis}
    Suppose that $\rho$ is $r$-{F\o lner} balanced and~$D$ is an operator of finite range~$r$. Then there exists an orthogonal basis of $E^D_\lambda \le \Hs$ consisting of functions with finite support. 
\end{corollary}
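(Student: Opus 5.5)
Let $W\subset E_\lambda$ denote the closure of the span of all finitely supported elements of $E_\lambda$. Precisely, $W=\int^\oplus W(G)\,d\rho$, where $W(G)$ is the closed linear span in $l^2(V(G))$ of all finitely supported $\lambda$-eigenfunctions of $D_G$. The plan is to show $W=E_\lambda$ and then extract an orthogonal basis of finitely supported functions from $W$.

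\textbf{Step 1: $W$ is a closed invariant subspace of $E_\lambda$.} The definition of $W(G)$ depends only on the unrooted network, so $P_W$ is equivariant. Measurability of $G\mapsto P_{W(G)}$ follows by enumerating finitely supported eigenfunctions on the canonical representative, say those supported in $B_k(G,o)$ for each $k$; each such space is the kernel of a finite matrix depending measurably on $(G,o)$. Hence $P_W\in\Alg$. The orthogonal complement $U=E_\lambda\ominus W$ is then also invariant, with $P_U=P_{E_\lambda}-P_W$.

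\textbf{Step 2: $\dim_\rho U=0$, hence $U=0$ by faithfulness of $\Tr$.} Since $W(G)\supset E^*_{\lambda,F_n}(G,o)$, the last part of the proof of Theorem~\ref{theorem:balanced} applied to $W$ in place of $E_\lambda$ gives
$$
\dim_{F_n}W\ge\int\frac{\dim E^*_{\lambda,F_n}(G,o)}{|F_n(G,o)|}\,d\rho .
$$
The right-hand side tends to $\dim_\rho E_\lambda$ by Theorem~\ref{theorem:balanced}. The quantity $\dim_{F_n}W=\int g_{F_n}\langle P_{W(G)}\I_o,\I_o\rangle\,d\rho$ tends to $\Tr(P_W)=\dim_\rho W$ by the same limiting argument as in \eqref{eq45986009457}, which uses $g_{F_n}\to1$ almost surely, $\|g_{F_n}\|_1=1$, and a bounded non-negative integrand. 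Therefore $\dim_\rho W\ge\dim_\rho E_\lambda$, so $\Tr(P_U)=0$. Since $P_U=P_U^*P_U$, faithfulness gives $U=0$ and $W=E_\lambda$.

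\textbf{Step 3: extract the basis.} For almost every $G$, the space $W(G)$ is the closure of the increasing union of the finite-dimensional spaces $E^*_{\lambda,B_k(G,o)}$. Apply Gram--Schmidt measurably, in the order of the canonical vertex labelling, to these nested spaces. This produces measurable finitely supported orthonormal vectors $e^{(j)}_{G,o}$, $j\in\N$, that span $W(G)$ fiberwise, with zeros added when the dimension is small.

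To get elements of $\mathcal H$, work with the family $\{\I_S e^{(j)}\}$, where $S$ ranges over measurable subsets of $\Gr$. Partition $\Gr$ into the sets $S_{j,m}=\{|\supp e^{(j)}|\le m\}$ minus earlier pieces; each product $\I_{S_{j,m}}e^{(j)}$ is then finitely supported, and indeed of bounded support size. These vectors are mutually orthogonal, since for fixed $j$ the sets $S_{j,m}$ are disjoint and for different $j$ orthogonality holds fiberwise. Their closed span is $\int^\oplus W(G)=E_\lambda$: a vector orthogonal to all of them is fiberwise orthogonal to every $e^{(j)}$ on a conull set, hence zero. This gives an orthogonal family of finitely supported functions with dense span, i.e.\ an orthogonal basis. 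If $\mathcal H$ is required to be separable with a countable basis, refine further using a countable generating algebra of Borel sets in $\Gr$.

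\textbf{Main obstacle.} The substantive steps are measurability and invariance of $W$ in Step 1, together with the comparison $\dim_{F_n}W\to\dim_\rho W$ in Step 2. Once those are in place, the conclusion reduces to Theorem~\ref{theorem:balanced} and faithfulness of the trace.
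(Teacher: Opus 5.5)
Your Steps 1 and 2 follow the paper's argument. The paper splits $E_\lambda=E_\lambda^f\oplus E_\lambda^i$, gets $\dim_\rho E^f_\lambda=\dim_\rho E_\lambda$ by rerunning the proofs of Theorems~\ref{theorem:main} and~\ref{theorem:balanced}, and kills $E^i_\lambda$ by faithfulness. You do the same, with more care about invariance and measurability, and you only need the easy inequality $\dim_{F_n}W\ge\int\dim E^*_{\lambda,F_n}/|F_n|\,d\rho$. Defining $W$ fiberwise is a good choice, because it makes $E^*_{\lambda,F_n}(G,o)\subset W(G)$ automatic. The paper tacitly identifies its $E^f_\lambda$, a closure in $\Hs$, with such a direct integral.

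The gap is in Step 3. The family $\{\I_{S_{j,m}}e^{(j)}\}$ is orthogonal, but its closed span is in general much smaller than $\int^\oplus W(G)\,d\rho$. If $v$ is orthogonal to $\I_{S_{j,m}}e^{(j)}$, you only get $\int_{S_{j,m}}\langle v_{G,o},e^{(j)}_{G,o}\rangle\,d\rho=0$, not $\langle v_{G,o},e^{(j)}_{G,o}\rangle=0$ almost surely. For example, suppose $W(G)$ is spanned by a single unit vector $e^{(1)}_{G,o}$ whose support always has size $m$. Then your family is essentially the single vector $e^{(1)}$. But $\int^\oplus\C\, e^{(1)}_{G,o}\,d\rho\cong L^2(\Gr,\rho)$ is infinite-dimensional when $\rho$ is non-atomic. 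The family $\{\I_S e^{(j)}\}$ over all measurable $S$ does have dense span, but it is not orthogonal. So the refinement you list as optional is required: $\Hs$ is separable, so every orthogonal basis of it is countable.

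Either of the following repairs the step.
\begin{itemize}
\item Take a countable algebra $\mathcal A$ generating the Borel sets. The countable family $\{\I_{S\cap S_{j,m}}e^{(j)}:S\in\mathcal A\}$ has dense span in $W$. Indeed, if $v\in W$ is orthogonal to all of it, the finite complex measure $S\mapsto\int_{S\cap S_{j,m}}\langle v_{G,o},e^{(j)}_{G,o}\rangle\,d\rho$ vanishes on $\mathcal A$, hence on all Borel sets. This gives fiberwise orthogonality and so $v=0$. Gram--Schmidt in $\Hs$ then preserves finite support, since it only forms finite linear combinations. This is exactly the paper's final sentence.
\item Alternatively, $\{\phi_k e^{(j)}\}_{j,k}$, where $\{\phi_k\}$ is an orthonormal basis of $L^2(\{e^{(j)}\neq0\},\rho)$, is directly an orthonormal basis of $W$ consisting of finitely supported functions.
\end{itemize}
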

\begin{proof}
    Let us decompose $E_\lambda = E_\lambda^{f} \oplus E_\lambda^{i}$ where $E_\lambda^f$ is the closure of the subset of $\lambda$-eigenfunctions with finite support and $E_\lambda^{i}$ is the orthogonal complement of~$E_\lambda^f$ in~$E_\lambda$.  
    Following the proof of Theorem~\ref{theorem:main} and Theorem~\ref{theorem:balanced} we obtain 
    $$
    \dim_\rho(E_\lambda^f) = \lim_{n \to \infty}\int \frac{\dim \langle f \in E_\lambda(G) \mid \supp f \subset F_n(G,o)\rangle}{|F_n(G,o)|} d\rho(G,o) = \dim_\rho(E_\lambda),
    $$
    where the last equality holds due to Theorem~\ref{theorem:balanced}. Hence, 
    $$
    \dim_\rho(E_\lambda^i) = \dim_\rho(E_\lambda) - \dim_\rho(E_\lambda^f) = 0,
    $$
    and, therefore, $E_\lambda^i = 0$. Then, applying the standard orthogonalisation procedure to a dense sequence of eigenfunctions of finite support, we obtain the orthogonal basis of $E_\lambda$.
\end{proof}
One of the consequences of Corollary~\ref{corollary:basis} is that for any~$\lambda \in \C$, $\rho$-almost surely there is a basis of~$l^2(V(G))$ that consists of finitely supported eigenfunctions. 

\subsection{Localization in amenable unimodular random networks}\label{section_amen_theorem}

In this section, we apply similar methods to unimodular random rooted networks that are amenable in the sense of Section~\ref{subsection_amen}.

\begin{theorem}\label{theorem:isoperimetric}
    Let~$\rho$ be a unimodular random network and~$D$ be an operator of finite range~$r$ such that $\dim_\rho^D(\lambda) > \imath_r$ for some~$\lambda \in \C$. Then with positive probability, the network has an eigenfunction with finite support.  
\end{theorem}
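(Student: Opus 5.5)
We mimic the proof of Theorem~\ref{theorem:main}, replacing the internal F\o lner sets $F_n(G,o)$ by the finite clusters of a percolation. Fix $\eps>0$ and choose $(\rho',\Xi_0)\in FC(\rho)$ with
$$\int \frac{|\partial_r K(\Xi_0)|}{|K(\Xi_0)|}\,d\rho' < \imath_r+\eps.$$
Here $\eps$ is taken smaller than $\dim_\rho(E_\lambda)-\imath_r$. Set $A(G,o)=K(\Xi_0)$, the cluster of the root; it is a measurable assignment of finite subsets on the marked network.

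The key advantage of clusters is that the function $g_A$ is identically $1$. Indeed, $o\in A(G,x)$ holds exactly when $x$ lies in the cluster of $o$, so
$$g_A(G,o)=\sum_{x\in K(o)}\frac1{|K(o)|}=1.$$
We work with the unimodular measure $\rho'$ on the enlarged mark space. The operator $D$ and the eigenspaces $E_\lambda(G)$ ignore the second marks, so $\Tr_{\rho'}(P_{E_\lambda})=\Tr_\rho(P_{E_\lambda})=\dim_\rho E_\lambda$. Formula~\eqref{eq:avfunction} then gives $\dim_A E_\lambda=\dim_\rho E_\lambda$ exactly, with no balancedness hypothesis needed.

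For the upper bound, inequality~\eqref{eq_proofthm1_1} gives
$$\dim_A E_\lambda\le \int \frac{\dim E_{\lambda,A}(G,o)}{|A(G,o)|}\,d\rho'.$$
The argument in the proof of Theorem~\ref{theorem:main} applies verbatim. Two elements of $E_{\lambda,A}$ that agree on $\partial_r A$ differ by a finitely supported eigenfunction. Hence either $G$ lies in the event $B_\lambda$ of networks with a finitely supported $\lambda$-eigenfunction, or $\dim E_{\lambda,A}\le|\partial_r A|$. Since $B_\lambda$ depends only on the underlying network, its $\rho'$-probability is $p_\lambda$. Combining the two bounds,
$$\dim_\rho E_\lambda\le p_\lambda+\imath_r+\eps.$$
Letting $\eps\to0$ yields $p_\lambda\ge \dim_\rho E_\lambda-\imath_r>0$, which is the claim.

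The main point to check is the measure-theoretic transfer between $\rho$ and $\rho'$. First, $\rho'$ must be unimodular, which holds by the definition of a percolation. Second, the projection $P_{E_\lambda}$ must be equivariant on the enlarged space, which holds because it is pulled back through $\pi$. Third, the mass transport step requires that $f(G,o,x)=\I(x\in K(o))/|K(o)|$ be a Borel function on $\Grr$ for the enlarged marks, which follows from finiteness of the clusters. A minor subtlety is that $\partial_r$ is measured in the graph metric of $G$, not of the cluster, and this matches the definition of $\imath_r$. I expect the remaining steps to be routine once these identifications are made.
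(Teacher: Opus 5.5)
Your reduction is the paper's own: take $A$ to be the root cluster of a percolation in $FC(\rho)$, use $g_A\equiv 1$ to get $\dim_A E_\lambda=\dim_\rho E_\lambda$, transfer $p_\lambda$ from $\rho'$ to $\rho$, and then reuse the counting step from the proof of Theorem~\ref{theorem:main}. The transfer part is fine.

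\textbf{The gap.} The problem is the counting step, which you import verbatim and on which the paper's proof also rests. It is not true that two elements of $E_{\lambda,A}$ agreeing on $S=\partial_r A$ differ by a finitely supported eigenfunction. Suppose $g\in E_\lambda(G)$ vanishes on $S$ and $h=g\,\mathbb{I}_A$. Then $(D-\lambda)h=0$ on $A\setminus S$ and outside $A$. But for $x\in S$ one has $(D-\lambda)h(x)=-\sum_{y\notin A}a(G,x,y)g(y)$, which need not vanish.

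\emph{Counterexample to the step.} Take the adjacency operator on a star with centre $c$ and leaves $\ell_1,\ell_2,\ell_3$, with $\lambda=0$ and $A=\{c,\ell_1\}$. The function $\mathbb{I}_{\ell_1}$ is the restriction of the eigenfunction $\mathbb{I}_{\ell_1}-\mathbb{I}_{\ell_2}$ and vanishes on $\partial A=\{c\}$. Yet it is not an eigenfunction.

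\emph{Counterexample to the dichotomy.} For non-self-adjoint $D$, even the dichotomy you need (either $G\in B_\lambda$ or $\dim E_{\lambda,A}\le|\partial_r A|$) fails. Work on the half-line $\{0,1,2,\dots\}$ with orientation recorded in the marks. The range-one operator $Df(0)=0$, $Df(n)=f(n+1)-\tfrac12 f(n-1)$ for $n\ge 1$, has the two-dimensional $\ell^2$-kernel $\{f : f(n+2)=\tfrac12 f(n)\}$. None of its non-zero elements is finitely supported. Yet for $A=\{0,1\}$ we have $\partial_1 A=\{1\}$ and $\dim E_{0,A}=2$.

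\textbf{What a repair gives.} Let $E^*_{\lambda,A}$ be the space of $\lambda$-eigenfunctions supported in $A$. You can fix the step by recording both $h|_S$ and $((D-\lambda)h)|_S$, or by requiring agreement on $\partial_{2r}A$. This only yields $\dim E_{\lambda,A}\le \dim E^*_{\lambda,A}+2|\partial_r A|$ (resp.\ $+|\partial_{2r}A|$). That costs nothing in Theorem~\ref{theorem:main} or Corollary~\ref{theorem:amenable}, where the boundary term tends to zero. Here the constant is the whole point: you get $\dim_\rho E_\lambda\le p_\lambda+2\imath_r$, i.e.\ the conclusion only under $\dim_\rho E_\lambda>2\imath_r$ (or $>\imath_{2r}$).

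\textbf{The self-adjoint case.} For symmetric coefficients $a(G,x,y)=\overline{a(G,y,x)}$ and real $\lambda$, the sharp constant can be recovered. For $x\in S$ put $q_g(x)=\sum_{y\notin A}a(G,x,y)g(y)$ and define the boundary data $\sigma(g)=(g|_S,q_g)\in\C^S\oplus\C^S$. For $g,g'\in E_\lambda(G)$, the finite sum $\langle (D-\lambda)(g\mathbb{I}_A),g'\rangle=\langle g\mathbb{I}_A,(D-\lambda)g'\rangle=0$ says precisely that $\langle g|_S,q_{g'}\rangle=\langle q_g,g'|_S\rangle$. Thus $\sigma(E_\lambda(G))$ is isotropic for a non-degenerate form on $\C^{2|S|}$, so it has dimension at most $|S|$. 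Moreover, $\sigma(g)=0$ forces $g\mathbb{I}_A\in E^*_{\lambda,A}$. Hence $\dim E_{\lambda,A}\le|\partial_r A|+\dim E^*_{\lambda,A}$, and the rest of your argument goes through.

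For general finite-range $D$, which the statement allows, your proof does not reach the threshold $\imath_r$.
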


As an immediate consequence we obtain the following.
\begin{corollary}\label{theorem:amenable}
    Let~$\rho$ be an amenable unimodular random network with finite expected degree and~$D$ be an operator of finite range such that $\dim_\rho^D(\lambda) > 0$ for some~$\lambda \in \C$. Then with positive probability, the network has an eigenfunction with finite support.  
\end{corollary}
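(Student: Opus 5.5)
The corollary follows from Theorem~\ref{theorem:isoperimetric} once we know that $\imath_r(\rho) = 0$. So the plan is to reduce to that theorem by showing that amenability together with finite expected degree gives a vanishing $r$-isoperimetric constant for every positive integer~$r$. Here $r$ is the range of~$D$.

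By definition, amenability means $\imath_E(\rho) = 0$. So there is a sequence of percolations $(\rho'_n, \Xi_n) \in FC(\rho)$ with finite clusters, whose root clusters $K_n = K(\Xi_n)$ satisfy
$$\int |\partial_E K_n|/|K_n| \, d\rho'_n \to 0.$$
I will pass from the edge boundary to the $r$-boundary along the lines of Proposition~\ref{proposition:rFolnler}.

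\textbf{Case $r = 1$.} Every vertex of $\partial K_n$ is incident to an edge of $\partial_E K_n$, so $|\partial K_n| \le |\partial_E K_n|$. This gives $\imath_1 \le \imath_E = 0$ directly.

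\textbf{Case $r = 2$.} We have $|\partial_2 K_n| \le \sum_{x \in \partial K_n} \deg x$. Apply the mass transport principle to the unimodular measure $\rho'_n$, transporting mass $1/|K(x)|$ from each vertex $x$ to the points of $\partial K(x)$, weighted by the degree. This yields
$$\int \frac{|\partial_2 K_n|}{|K_n|} \, d\rho'_n \le \int \deg(o)\, h_n(G,o) \, d\rho'_n ,$$
where $h_n(G,o) = \mathbb{I}(o \in \partial K(o)) \le 1$. Here we use the key feature that distinguishes percolation clusters from general Følner sets: the clusters partition the vertex set, so the relevant function $g_{K}$ is identically~$1$. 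By the case $r=1$ we have $\int h_n \, d\rho'_n \to 0$. Since the $\rho'_n$-law of $\deg(o)$ equals its $\rho$-law and is integrable, uniform integrability of $\deg$ gives $\int \deg \cdot h_n \to 0$: split according to whether $\deg \le M$ or $\deg > M$, and use $h_n \le 1$.

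\textbf{Induction in~$r$.} The step from $r$ to $r+1$ is where care is needed. I will bound
$$|\partial_{r+1} K| \le |\partial_r K| + \sum_{x \in \partial_r K} \deg x,$$
and repeat the argument with $h_n$ replaced by the indicator of $o \in \partial_r K(o)$. This again has integral tending to zero by the induction hypothesis, and it is bounded by~$1$. The same uniform integrability argument then completes the induction.

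The main obstacle is exactly this inductive step for unbounded degrees. It works precisely because, for a percolation partition, the averaging weight is $g \equiv 1$, so the hypothesis $g_{F_n} < C$ of Proposition~\ref{proposition:rFolnler} is automatically satisfied with $C = 1$.

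Having established $\imath_r = 0$, the hypothesis $\dim_\rho(E_\lambda) > 0 = \imath_r$ is exactly the hypothesis of Theorem~\ref{theorem:isoperimetric}. That theorem then yields a finitely supported $\lambda$-eigenfunction with positive probability, which proves the corollary.
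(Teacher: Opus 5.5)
Your proposal is correct and follows the paper's route. The paper obtains the corollary directly from Theorem~\ref{theorem:isoperimetric}, using the remark that for finite expected degree $\imath_E = 0$ implies $\imath_r = 0$ by the computation in the proof of Proposition~\ref{proposition:rFolnler}. You carry out that computation explicitly, with $g_K \equiv 1$ for percolation clusters playing the role of the bound $g_{F_n} < C$, and you spell out the induction and uniform-integrability steps that the paper leaves implicit.
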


\begin{proof}[Proof of Theorem~\ref{theorem:isoperimetric}]
    The argument follows the proof of Theorem~\ref{theorem:main}. Let us find a positive~$\eps$ such that ~$\imath_r <\eps < \dim_\rho (E_\lambda)$ and a percolation process $\rho^\prime$ on~$\rho$ with finite clusters such that 
    $$
    \int \frac{|\partial_r F(G,o)|}{|F(G,o)|} d \rho^\prime(G,o) < \eps,
    $$
    where $F(G,o)$ is the cluster of the vertex~$o$ in~$G$. 
    A nice property of~$F$ is that
    $$
    g_F(G,o) = \sum_{o \in F(G,x)} \frac{1}{|F(G,x)|} = 
    \sum_{x \in F(G,o)} \frac{1}{|F(G,o)|} = 1.
    $$
    Using this, one can apply the proof of Theorem~\ref{theorem:main} and obtain
    \begin{equation*}
        \dim_\rho (E_\lambda) \le p_\lambda + \int\frac{|\partial_r F(G,o)|}{|F(G,o)|} d \rho^\prime(G,o) < p_\lambda + \eps,
    \end{equation*}
    where~$p_\lambda$ is the measure $\rho^\prime$ of those networks that have a $\lambda$-eigenfunction with finite support. Note that the measure~$\rho$ of such networks is the same. Together with the choice of~$\eps$, this implies that this probability $p_\lambda$ is positive. 
\end{proof}

The analogues of Corollaries~\ref{corollary:algebraic} and~\ref{corollary:sofic1} for amenable~$\rho$ follow in a similar manner from Corollary~\ref{theorem:amenable}. The analogue of Theorem~\ref{theorem:balanced} can also be derived from Corollary~\ref{theorem:amenable} although one would have to average over different percolations on~$\rho$ instead of~$\rho$ itself, and take eigenfunctions supported on a random cluster of the origin instead of a deterministic F\o lner set. In practice, coming up with a deterministic F\o lner set for each realisation might be an easier task then constructing a percolation process with F\o lner clusters. Further, one has for amenable~$\rho$ the existence of a basis of finitely supported eigenfunctions for every eigenspace, that is, the analogue of Corollary~\ref{corollary:basis}. The pointwise convergence of spectral measures, that is, Corollary~\ref{corollary:approximation} seems to be more complicated in this setting as one would have to approximate an arbitrary percolation on~$\rho$ by a percolation on~$\rho_n$. Doing so in full generality seems to be out of the scope of this paper.  



\section{Long-range graphs}\label{section:long-range}

In this section, we apply Theorem~\ref{theorem:main} to a family of infinite graphs that we call \emph{long-range} graphs, as they describe interaction systems with locally finite but globally unbounded range. These graphs are in general not homogeneous nor quasi-transitive. The definition is inspired by the example of \emph{$\omega$-periodic} graphs that were introduced in~\cite{BH} and further studied in~\cite{BCDN} as Schreier graphs of a self-similar group acting on a binary tree.

\begin{definition}
    A \emph{long-range} graph is a $4$-regular graph $X = (V, E)$ whose vertex set~$V$ is identified with~$\Z$ and the set of edges consists of the set $\{(v, v + 1)\}_{v \in \Z}$ and the other edges (which we call \emph{suspensions}) are such that every vertex has two suspensions one going to the left and one to the right. The root of~$X$ is set to be $0 \in \Z$.

    Here we will assume that the edges of the form $\{(v, v + 1)\}_{v \in \Z}$ are distinguished as well as their orientation. This allows us to determine by any two vertices~$x$ and~$y$ in a long-range graph their order on the line. Formally, this means that we consider networks, which we will still refer to as "graphs," having this distinction in mind.   
\end{definition}

    Note that this definition has straightforward generalization to higher dimensions by replacing~$\Z$ with~$\Z^d$ and drawing a suspension in every one of the~$2d$ directions in the lattice. The proofs for the higher-dimensional case are identical to the ones presented below for dimension one. 
    
\begin{claim}
    Let~$\rho$ be a unimodular random rooted graph concentrated on long-range graphs. Then~$\rho$ is {F\o lner} balanced and, in particular, leaf-wise amenable. 
\end{claim}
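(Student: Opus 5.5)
The plan is to use intervals of the underlying line as the F\o lner sets. For a long-range graph $X$ rooted at $0$, set
$$
F_n(X,0) = \{-n, -n+1, \ldots, n\} \subset \Z .
$$
This is a measurable (indeed continuous) assignment, since the oriented line edges can be recognised from the marks; the interval is determined by following the distinguished oriented edges $n$ steps in each direction from the root. Note that the line structure is intrinsic: by the mass transport principle and the orientation marks, the identification $V\cong\Z$ is determined up to translation. Changing the root therefore just shifts the interval, which is what the computation of $g$ requires.

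First I compute $g_{F_n}$. A vertex $x$ (in coordinates relative to the root $o=0$) satisfies $o\in F_n(X,x)$ exactly when $|x|\le n$, and every such set has size $2n+1$. Hence
$$
g_{F_n}(X,o) = \sum_{|x|\le n}\frac{1}{2n+1} = 1
$$
identically. So the balance condition holds trivially, with no need to pass to limits.

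Second I check the F\o lner property. The inner boundary $\partial F_n$ consists of the endpoints $\pm n$ together with the vertices of $F_n$ having a suspension leaving the interval. Every vertex has degree $4$, so $|\partial F_n|\le|F_n|$ always, and I need the expectation of $|\partial F_n|/|F_n|$ to tend to $0$.

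This is the main obstacle, because suspensions can be long and an interval might have many vertices whose suspensions exit it. I would bound the count by the number of suspensions crossing the two cut points $\pm n$ plus a term for $x\in F_n$ whose right suspension has length greater than $n-x$, and similarly on the left. Let $L(X,x)$ be the length of the right suspension at $x$. Then
$$
\frac{|\partial F_n|}{|F_n|} \le \frac{2}{2n+1} + \frac{1}{2n+1}\sum_{|x|\le n}\Bigl(\I\bigl(L(X,x)>n-x\bigr)+\I\bigl(L'(X,x)>x+n\bigr)\Bigr),
$$
where $L'$ is the length of the left suspension. By formula~\eqref{eq:avfunction} with $g\equiv1$, together with a shift reindexing via the mass transport principle, the expectation of the average becomes
$$
\frac{1}{2n+1}\sum_{k=0}^{2n}\mbbP(L>k)+\frac{1}{2n+1}\sum_{k=0}^{2n}\mbbP(L'>k).
$$
Since $L<\infty$ almost surely, $\mbbP(L>k)\to0$, and these Ces\`aro averages tend to $0$. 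Thus $\{F_n\}$ is F\o lner and balanced.

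Leaf-wise amenability follows because a F\o lner sequence exists. Finite expected degree holds because the degree is $4$. If edge-F\o lner is wanted, the same argument applies: at most $4$ boundary edges are counted per boundary vertex. Proposition~\ref{proposition:balance_implies_amen} then also gives amenability in the sense of Section~\ref{subsection_amen}.
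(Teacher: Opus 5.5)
Your proof is correct and follows the same route as the paper. It uses the intervals $\{-n,\dots,n\}$, for which $g_{F_n}\equiv 1$, together with the mass-transport fact that the law of the suspension length at a vertex does not depend on its offset from the root. That fact is exactly what your ``shift reindexing'' has to supply, since formula~\eqref{eq:avfunction} by itself does not cover summands that depend on the offset $x$.

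The only difference is the boundary estimate. Your exact exit criterion $L(X,x)>n-x$ turns the expected boundary ratio into a Ces\`aro average of the tails $\mbbP(L>k)$, which gives the F\o lner property for the whole sequence directly. The paper instead truncates suspensions at length $k$ and applies Markov's inequality and Borel--Cantelli along a subsequence $n_m$, obtaining a slightly stronger almost-sure F\o lner property than the definition requires.
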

\begin{proof}
    For a vertex~$x$ in a long range graph~$G$ denote by~$l(x)$ and~$r(x)$ the lengths of its left and right suspensions respectively. Unimodularity implies that the distribution of the lengths of the suspensions does not depend on the vertex. Indeed, let us show this for the distribution of~$r(x)$. Let~$k$ and~$i$ be fixed integer numbers and consider a function 
    $$
    f(G, x, y) = \mathbb{I}(y - x = k) \mathbb{I}(r(y) = i).
    $$
    The mass transport principle applied to~$f$ gives 
    \begin{multline}
        \rho(r(k) = i) = \int \mathbb{I}(r(k) = i) d \rho (G, 0) = \int \sum_{y \in G} f(G, 0, y) d \rho (G, 0) = \\
        \int \sum_{y \in G} f(G, y, 0) d \rho (G, 0) = \int \mathbb{I}(r(0) = i) d \rho (G, 0) = \rho(r(0) = i).
    \end{multline}
    Here we use the identification of the vertices of the graph with integer numbers. For each positive integer~$i$, let the numbers $p_i^+$ and $p_i^-$  denote the probability that the right and, respectively, left suspension at~$0$ has length~$i$. 
    
    Consider a sequence of subsets $F_n = \{-n, \ldots,n\} \subset \Z = V(G)$. We will show that it has a subsequence that is almost surely F\o lner. Then the measure~$\rho$ would be clearly {F\o lner} balanced. 
    Given a positive integer~$k$, let $S_{k,n}(G,0)$ be the number of vertices in~$F_n$ that have at least one suspension of length more than~$k$. Then
    $$
    \int S_{k,n}(G,0) d\rho(G,0) \le2 n \cdot \left( \sum_{i > k}p_i^+ + \sum_{i > k} p_i^- \right) = n \cdot \eps(k),
    $$
    where $\eps(k)$ is the value of $2\left( \sum_{i > k}p_i^+ + \sum_{i > k} p_i^- \right)$ which goes to zero in~$k$.
    Therefore, 
    \begin{equation}\label{eq:long-range0}
        \rho\left(S_{k,n}(G,0) > n \cdot \sqrt{\eps(k)}\right) \le \sqrt{\eps(k)}.
    \end{equation}
    For every~$m$, choose~$k_m$ such that $\eps(k_m) < 2^{-2m}$ and a sufficiently large~$n_m$, one can take, for instance,~$n_m = 2^m k_m$. One has the following estimate on the size of the boundary of~$F_{n_m}$
    \begin{equation}\label{eq:long-range1}
        \frac{|\partial F_{n_m}(G,0)|}{|F_{n_m}(G,0)|} \le \frac{2k_m + 2S_{k_m,n_m}(G,0)}{2n_m}. 
    \end{equation}
    Let~$A_m$ be the subset of long-range graphs~$(G,0)$ such that 
    $$
    \frac{|\partial F_{n_m}(G,0)|}{|F_{n_m}(G,0)|} > 4 \cdot 2^{-m}.
    $$
    Due to inequalities~\eqref{eq:long-range1} and~\eqref{eq:long-range0}, the probability of~$A_m$ is bounded from above by~$2^{-m}$ and, hence, almost surely only finitely many of~$A_m$-s are satisfied. Hence, the sequence $F_{n_m}$ is indeed F\o lner almost surely. It follows, in particular, that
    $$
    \lim_{m \to \infty}\int \frac{|\partial F_{n_m}(G,0)|}{|F_{n_m}(G,0)|} d \rho(G,0) = 0.
    $$
\end{proof}

\begin{proposition}\label{proposition:long-range_eigenfunctions}
    The adjacency operator of a long-range graph $X = (V, E)$ does not possess an eigenfunction with finite support. 
\end{proposition}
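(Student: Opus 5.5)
The plan is to argue by contradiction, looking at the vertex of the long-range graph that lies farthest to the right among those joined to the support of a putative eigenfunction. Suppose $f\neq 0$ is finitely supported and satisfies $Af=\lambda f$, where $A$ is the adjacency operator of $X$. Let $S=\supp f\subset\Z$ and $m=\max S$. For each vertex $x$ let $R(x)>x$ be the endpoint of its right suspension. The key structural fact is that $R$ is injective. Indeed, a suspension joining $x<y$ is the right suspension of $x$ and the left suspension of $y$, and $y$ has only one left suspension, so $y$ determines $x$. Here we use the distinguished orientation of the line edges, which makes "left" and "right" meaningful.

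Set $y=\max_{x\in S} R(x)$ and let $x_0\in S$ be the unique vertex with $R(x_0)=y$. Then $y\ge R(m)\ge m+1$, so $y\notin S$ and the eigenvalue equation at $y$ reads
$$
\sum_{z\sim y} f(z)=\lambda f(y)=0,
$$
where the neighbours are counted with multiplicity. I then list the neighbours of $y$ that can lie in $S$.

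\emph{Case $y>m+1$.} The line neighbours $y\pm1$ exceed $m$, so they are not in $S$. The right suspension of $y$ goes further right, so it is not in $S$ either. The left suspension of $y$ ends at $x_0\in S$. The equation therefore gives $f(x_0)=0$, contradicting $x_0\in S$.

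\emph{Case $y=m+1$.} Since $m+1\le R(m)\le y=m+1$, we get $R(m)=m+1$, and injectivity of $R$ forces $x_0=m$. The neighbours of $m+1$ lying in $S$ are then $m$ through the line edge and $m$ again through the left suspension, while $m+2$ and the right suspension endpoint lie outside $S$. The equation gives $2f(m)=0$, again a contradiction.

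Hence no nonzero finitely supported eigenfunction exists, for any $\lambda\in\C$. The only delicate step is the bookkeeping in the degenerate case $y=m+1$, where the suspension may coincide with a line edge and so create a multiple edge. I will handle it exactly as above, counting neighbours with multiplicity in the adjacency operator.
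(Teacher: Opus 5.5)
Your proof is correct and is essentially the paper's argument: both find a vertex outside $\supp f$ whose only neighbour in the support is a single vertex, joined to it by one or two edges, and conclude that $f$ vanishes at that support vertex. The paper takes the test vertex to be $w=R(\max S)$ directly; its left suspension ends at $\max S$ itself, so maximizing $R(x)$ over $x\in S$ is unnecessary, though your choice of test vertex works equally well.
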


\begin{proof}
    Assume the contrary that there exists an eigenfunction  $f \colon \Z \to \R$  corresponding to an eigenvalue~$\lambda \in \R$. Let $v \in \Z$ be the maximal integer such that~$f(v) \not = 0$. Let~$w \ge v + 1$ be the maximal integer connected to~$v$ by an edge. Then the only vertex in $\supp f$ connected to~$w$ is~$v$. Let $m \in \{1,2\}$ be the number of edges between~$w$ and~$v$. Then
    $$
    \lambda f(w) = m f(v),
    $$
    and, hence, $f(v) = 0$ which contradicts the assumption.
\end{proof}

Note that the same argument applies without changes to any range~$1$ operator with positive $1$-interactions, for example, one can consider edge weights that depend on the length of the edge or add a (random) potential and consider (random) Schrödinger operators.  As a corollary of Theorem~\ref{theorem:main}, we obtain the following result.

\begin{theorem}\label{theorem:long-range}
    Let~$\rho$ be a unimodular random long-range graph. Then the expected spectral measure of the adjacency operator on~$\rho$ is continuous. 
\end{theorem}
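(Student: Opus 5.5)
The plan is to argue by contradiction, combining the Claim preceding Proposition~\ref{proposition:long-range_eigenfunctions} with Theorem~\ref{theorem:main}. The adjacency operator of a long-range graph has range $r=1$: its coefficients $a(G,x,y)$ count the edges between $x$ and $y$, and so depend only on the $1$-neighbourhood. Since every vertex has degree $4$, we have $\sum_x |a(G,o,x)| = 4$, so the adjacency operator $A$ is a bounded self-adjoint element of $\Alg(\rho)$ and its expected spectral measure $\mu_\rho^A$ is well defined. Continuity of $\mu_\rho^A$ means it has no atoms. By formula~\eqref{eq_dimatom}, it therefore suffices to show that $\dim_\rho(E^A_\lambda)=0$ for every $\lambda\in\R$.

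Suppose instead that $\dim_\rho(E_\lambda^A)>0$ for some $\lambda$. By the Claim, $\rho$ is F\o lner balanced via the sequence $F_{n_m}=\{-n_m,\dots,n_m\}$, which is in particular weakly $1$-F\o lner balanced, since $\partial_1=\partial$. Theorem~\ref{theorem:main} with $r=1$ then produces, with positive probability, a finitely supported $\lambda$-eigenfunction of $A$ on a long-range graph. This contradicts Proposition~\ref{proposition:long-range_eigenfunctions}, which holds for every long-range graph, and the contradiction shows that $\mu_\rho^A$ has no atoms.

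The main point to verify is that the hypotheses of Theorem~\ref{theorem:main} hold in the form the Claim supplies. Two details need attention.

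First, the Claim establishes the F\o lner property along a subsequence almost surely, together with convergence of the expected boundary ratio. The definition of balancedness also requires $g_{F_n}\to 1$ almost surely. Here $g_{F_n}(G,0)=\sum_{x:\,0\in F_n(G,x)} 1/(2n+1)=1$ exactly, because $0\in\{x-n,\dots,x+n\}$ holds for exactly $2n+1$ values of $x$. So balancedness is immediate, with $c$ arbitrarily close to $1$.

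Second, the proof of Theorem~\ref{theorem:main} uses the $r$-F\o lner condition only in the integrated form $\int |\partial_1 F_n|/|F_n|\,d\rho\to 0$, and this is exactly what the Claim gives along $n_m$.

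Finally, I would remark that complex $\lambda$ cannot carry mass at all, because $A$ is self-adjoint. Even so, the argument above covers any $\lambda\in\C$, since Proposition~\ref{proposition:long-range_eigenfunctions} works verbatim for complex eigenvalues: it only uses $\lambda f(w)=m f(v)$ with $f(w)=0$.
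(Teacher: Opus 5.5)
Your proof is correct and takes the same route as the paper, which states the theorem as an immediate corollary of Theorem~\ref{theorem:main}. It combines the Claim (F\o lner balancedness) with Proposition~\ref{proposition:long-range_eigenfunctions} (no finitely supported eigenfunctions) and formula~\eqref{eq_dimatom}. Your added checks are correct and fill in details the paper leaves implicit: that $g_{F_n}\equiv 1$ for the intervals $\{-n,\dots,n\}$, and that Theorem~\ref{theorem:main} only needs the integrated $1$-F\o lner condition.
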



We will apply Theorem~\ref{theorem:long-range} to a family of Schreier graphs of an automata group~$\Gamma$ that were studied in~\cite{BCDN}. These graphs first appear in~\cite{BH} where they are called \emph{$\omega$-periodic}. The group~$\Gamma$ is generated by two transformations~$a$ and~$b$ of the space $\{0,1\}^\N$ defined as follows 
\begin{equation}
    a(0w) = 1w;  \quad a(1w) = 0a(w); \quad b(0w) = 0b(w); \quad b(1w) = 1a(w).
\end{equation}
This representation gives rise to a self-similar action of~$\Gamma$ on the rooted infinite binary tree~$T_2$. It is shown in~\cite{BCDN} that the Schreier graphs $G_\omega, \omega\in \partial T_2$ of the action of~$\Gamma$ on the boundary of the tree are a family of long-range graphs. Hence, we can apply Theorem~\ref{theorem:long-range} and conclude that the density of states of the family $\{G_\omega\}_{\omega\in  \partial T_2}$ is continuous. 

One can view more general families of long-range graphs as Schreier graphs of p.m.p. actions of a group. Let~$a$ be an aperiodic invertible measure-preserving transformation of the standard Lebesgue space~$(X,m)$  and~$\xi = \{A_1, A_2,\ldots \}$ be a countable measurable partition. Let us define a transformation~$b$ on~$A_k, \ k =1, 2,\ldots$, as the first return map for the transformation~$a$. This makes~$b$ an invertible measure-preserving transformation of~$(X, m)$. 
\begin{proposition}
    Let~$\Gamma$ be the group generated by the transformations~$a$ and~$b$, and~$S = \{a, b, a^{-1}, b^{-1}\}$ be the set of generators and their inverses. Then $m$-almost any $x \in X$, the Schreier graph $Sch(\Gamma, S,   Stab_\Gamma(x))$ is a long-range graph.
\end{proposition}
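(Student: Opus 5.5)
Fix $x\in X$ and identify its $a$-orbit with $\Z$ via $n\mapsto a^n x$. Since $a$ is aperiodic, for $m$-almost every $x$ this map is a bijection onto the orbit, and the $a$-edges give exactly the line edges $\{(v,v+1)\}_{v\in\Z}$ with their orientation. The $b$-edges should play the role of the suspensions. By construction, $b$ sends a point $y\in A_k$ to $a^{n}y$, where $n\ge 1$ is the first return time of $y$ to $A_k$. So each vertex $v$ has one $b$-edge going right, to $v+n$, and one $b^{-1}$-edge going left, to $v-n'$ where $n'$ is the first return time to $A_k$ under $a^{-1}$. This already gives the required $4$-regular structure: every vertex has two line edges and two suspensions, one to each side.

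The plan is therefore as follows.

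First, discard a null set so that three things hold for the remaining $x$: the orbit is infinite, every first return time along the orbit is finite, and the Schreier graph equals the orbit graph. For the last point, recall that $Sch(\Gamma,S,Stab_\Gamma(x))$ is isomorphic to the orbital graph of $\Gamma x$ with edges $y\sim sy$ for $s\in S$. Since $b$ preserves each $A_k$ and moves points only along $a$-orbits, we have $\Gamma x = \{a^n x\}_{n \in \Z}$, so this orbital graph is exactly the graph described above.

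Second, prove finiteness of return times in both directions. Poincaré recurrence, applied to $a$ and to $a^{-1}$ on each $A_k$ of positive measure, shows that almost every point of $A_k$ returns to $A_k$ infinitely often in both the forward and backward directions. Taking a countable union over $k$, and then over the translates $a^n(\cdot)$ so that the property holds at every point of the orbit and not only at $x$, gives a conull $a$-invariant set on which $b$ is well defined and where $b$ and $b^{-1}$ move each vertex a finite nonzero distance to the right and to the left respectively. The pieces $A_k$ of measure zero are absorbed into the null set.

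Third, deal with the degenerate case $n=1$, which happens when $ay\in A_k$. There the suspension coincides with a line edge, producing a double edge. This is compatible with the definition, since Proposition~\ref{proposition:long-range_eigenfunctions} explicitly allows multiplicity $m\in\{1,2\}$. Finally, check that $b^{-1}$ at $v+n$ returns to $v$, so the right suspension of $v$ and the left suspension of $v+n$ are the same edge, consistent with the suspensions being undirected.

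The main obstacle is the second step. The almost-sure statement has to hold simultaneously at all vertices of the orbit, not only at the root, and this is what the invariance argument via countable unions is designed to handle.
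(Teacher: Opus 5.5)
Your proposal is correct and follows the same route as the paper. In both, you identify $\Gamma x=\{a^kx\}_{k\in\Z}$ with $\Z$ via aperiodicity of $a$, take the $a$-edges as the oriented line edges, and take the $b^{\pm1}$-edges as the right and left suspensions, whose lengths are the positive forward and backward first-return times. You also spell out what the paper leaves implicit: finiteness of the return times via Poincar\'e recurrence on an $a$-invariant conull set, and the admissible length-one suspension, which gives a double edge.
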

\begin{proof}
    By definition, we have $b(x) = a^{n(x)}$, where $n(x)$ is the smallest positive integer~$k$ such that $a^k(x)$ belong to the same cell of~$\xi$ as~$x$. Therefore, the vertex set of the Schreier graph of~$x$ is $\{a^kx\}_{k\in\Z}$ and can be identified with~$\Z$ almost surely, since the action of~$a$ is essentially free. The arcs coming from the action of~$a$ correspond to the edges $(v, v+1)$, and the arcs coming from the action of~$b$ correspond exactly to the suspension arcs in the graph. The fact that~$n(x)$ is positive ensures that each vertex has exactly one suspension to the left of it and exactly one to the right, that is, the Schreier graph $Sch(\Gamma, S,   Stab_\Gamma(x))$ is almost surely a long-range graph. 
\end{proof}

In particular, a random choice of a point~$x \in X$ gives rise to a unimodular long-range graph. Together with Theorem~\ref{theorem:long-range}, this implies the continuity of the expected spectral measure of the family of Schreier graphs~$Sch(\Gamma, S,   Stab_\Gamma(x))$. One can see that the family of $\omega$-periodic graphs can be obtained in this way by taking the transformation~$a$ to be the binary odometer and the sets~$A_k = [2^{-k}, 2^{-k + 1})$ for every positive integer~$k$. In~\cite{BH, BCDN} it was shown that the $\omega$-periodic graphs have intermediate growth. It would be interesting to investigate the growth and other geometric properties of the long-range graphs in general.

\section{Non-balanced unimodular random graphs}

In this section, we give an example of a random unimodular graph that is leaf-wise amenable and not weakly {F\o lner} balanced. Moreover, we will show that this random graph has an atom in the density of states at a point~$\lambda = -3$ for the adjacency operator, and no finitely supported eigenfunctions for~$\lambda$. This example is based on the construction by Kaimanovich (see~\cite{Ka98}) of a leaf-wise amenable random rooted graph which is not amenable as a unimodular measure.

Let $\Gamma = \langle a,b \mid a^2 = b^3 = id\rangle \cong SL(2,\Z)$ be the modular group and $\Gamma \curvearrowright (X,\mu)$ be an ergodic essentially free action of~$\Gamma$ on a standard probability measure space~$(X,\mu)$, e.g., a Bernoulli action. Let $m\colon X \to \mathbb{N}$ be a measurable summable function such that~$m \ge 2$ almost everywhere. Consider the following measure space 
$$
(Y, \rho) = \{(x, k) \in X \times \mathbb{N} \mid k= -m(x),\ldots,-1,1, \ldots, m(x)\},
$$
where the measure~$\rho$ is defined on every subset $m^{-1}(k) \times\{l\} \subset Y$, $0<|l| \le k$  as a copy of~$\mu$ restricted to $m^{-1}(k)$ and normalised by $2\int md\mu$ to be a probability measure. We will identify two isomorphic copies~$X^+$ and~$X^-$ of~$X$ in~$Y$, namely $X \times \{+1\}$ and $X \times \{-1\}$. Let~$\pi$ be the natural projection $\pi \colon Y \to X$ that maps $(x,k)$ to~$x$. We connect two points~$(x,k)$ and~$(y, l)$ in~$Y$ by an edge in two cases. Fist case is when the levels~$k$ and~$l$ are the same and equal to~$1$ or~$-1$ and $y = xs$, $s \in \{a^{\pm}, b^{\pm} \}$. This gives two copies of the Cayley graph $G^0 =Cay(\Gamma, \{a^\pm, b^\pm\})$. As $a = a^{-1}$ the Cayley graph has double edges corresponding to~$a$. The second case is when $x = y$. Then we connect both~$(x,2)$ and~$(x,-2)$ to both~$(x,1)$ and~$(x, -1)$; for $2\le |k| < |l| \le m(x)$ we connect connect $(x, k)$ and $(x, l)$ if $|k-l| = 1$. This gives for every fixed~$x$ a graph consisting of two segments of length~$m(x) - 2$ glued to the opposite vertices of a square. The connected component~$(G,o)$ of a point~$o$ taken randomly in~$Y$ with respect to the measure~$\rho$ gives rise to a unimodular random rooted graph. We will denote it by the same letter~$\rho$. 

\begin{proposition}
    Random rooted graph~$\rho$ is almost surely amenable and has a point mass at~$-3$ in its density of states of the adjacency operator. At the same time~$\rho$ has no finitely supported $-3$ eigenfunctions almost surely. 
\end{proposition}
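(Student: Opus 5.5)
The plan is to treat the three assertions separately. In each, the graph $(G,o)$ is viewed as two copies of the Cayley graph $G^0$ (levels $\pm1$) together with, at each $x$, a square on $(x,\pm1),(x,\pm2)$ carrying two paths $(x,2),\dots,(x,m(x))$ and $(x,-2),\dots,(x,-m(x))$.

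\emph{Leaf-wise amenability.} I assume, as in Kaimanovich's construction, that $m$ is essentially unbounded. Then by ergodicity and freeness of the action, almost every orbit contains points $x$ with $m(x)$ arbitrarily large. The segment $\{(x,k): L\le k\le 2L\}$ of such a path has vertex boundary of size $2$ and $L+1$ vertices. These segments therefore form a Følner sequence inside almost every realisation. This is a routine Borel--Cantelli/ergodicity check.

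\emph{The atom at $-3$.} I would use antisymmetric functions: $f(x,1)=h(x)=-f(x,-1)$, and $f=0$ on all levels $|k|\ge2$. At $(x,\pm2)$ the two contributions from $(x,1)$ and $(x,-1)$ cancel. At $(x,\pm1)$ the eigenvalue equation reduces to $A_{G^0}h=-3h$, where $A_{G^0}=2a+b+b^{-1}$ acts by right multiplication. Now $-3=2\cdot(-1)+(-1)$. Hence every $h$ in the range of $P=\frac{1-a}{2}\cdot\frac{2-b-b^{2}}{3}$-type intersection, namely the intersection of the ranges of the projections $p=\frac{1-a}2$ (trace $\frac12$) and $q=1-\frac{1+b+b^2}{3}$ (trace $\frac23$), is a $-3$-eigenfunction. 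In the group von Neumann algebra of $\Gamma$ this intersection has dimension at least $\frac12+\frac23-1=\frac16>0$. Transporting it to $\mathcal H$, the root lies on level $\pm1$ with probability $1/\int m\,d\mu>0$. There $\langle P_{E_{-3}}\mathbb I_o,\mathbb I_o\rangle\ge\frac12\cdot\frac16$, with the factor $\frac12$ coming from the antisymmetrisation; the group diagonal is constant by transitivity. This gives $\mu_\rho(-3)=\dim_\rho E_{-3}>0$ by~\eqref{eq_dimatom}. The remaining work is checking that this subspace is invariant and measurable in the sense of $\Alg$, which should follow from its description in terms of the group action.

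\emph{No finitely supported eigenfunction.} This is the main step. Suppose $f$ is finitely supported with $Af=-3f$. Split it into $v(x)=f(x,1)-f(x,-1)$ and $u(x)=f(x,1)+f(x,-1)$, and similarly $w^\pm_k=f(x,k)\pm f(x,-k)$ along the paths.
\begin{enumerate}
\item \emph{Antisymmetric part.} Exactly as above, $v$ satisfies $A_{G^0}v=-3v$ on $G^0$.
\item \emph{Path recursions.} Each path is finite, so the tip equation $-3w_{m}=w_{m-1}$ and the recursion $-3w_k=w_{k-1}+w_{k+1}$ determine $w_2,\dots,w_m$ linearly from the tip value. For the antisymmetric sum the junction equation at $k=2$ reads $-3w^-_2=w^-_3$, so the $w^-$ values are decoupled from $v$ entirely.
\item \emph{Symmetric junction.} For the symmetric sum the junction equation is $-3w^+_2=2u+w^+_3$. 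The ratios appearing are values of Chebyshev-type polynomials at $-3$, which are nonzero since $|-3|>2$. Consequently $w^+_2=c_{m(x)}\,u(x)$ for explicit constants $c_{m(x)}$, and $u$ satisfies $(A_{G^0}+2c_{m(x)})u=-3u$. This is a Schrödinger operator on $G^0$ with a potential.
\item \emph{Leaf argument.} $G^0$ is a tree of blocks, namely triangles (the $b$-cosets) joined by double $a$-edges. For finitely supported nonzero $u$ or $v$, take an extremal block meeting the support. One finds a vertex $w\notin\supp$ whose only neighbour in the support is a single vertex $v_0$, joined to $w$ by one edge of multiplicity $1$ or $2$. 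This uses either the $a$-neighbour of a non-cut vertex of the extremal triangle, or a triangle-neighbour of the cut vertex. The equation at $w$, whatever the potential, gives $0=\text{(mult)}\,u(v_0)$, as in Proposition~\ref{proposition:long-range_eigenfunctions}. This contradicts $v_0\in\supp u$.
\item \emph{Conclusion.} Hence $u=v=0$, so $f$ vanishes on levels $\pm1$. The path recursions, again using nonvanishing of the Chebyshev values, then force $w^\pm\equiv0$. Therefore $f=0$.
\end{enumerate}
The main obstacle I expect is this last case analysis: the extremal-block argument for the potential-perturbed operator, together with the bookkeeping that the $\pm2$ junction couples only the symmetric part.
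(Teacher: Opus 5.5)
Your proof is correct. For leaf-wise amenability and for the atom at $-3$ it is essentially the paper's argument. The paper also uses arbitrarily long dangling intervals; this tacitly needs $m$ to be essentially unbounded, as you point out. It also uses the antisymmetric lift $h(x^\pm)=\pm f(x)$ of a $-3$-eigenfunction $f$ of $G^0$.

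On the atom, the paper cites \cite{KK25} for the existence of $f$. You instead construct it from $\tau(p\wedge q)\ge\tau(p)+\tau(q)-1=\frac16$, which also gives the explicit bound $\mu_\rho(\{-3\})\ge 1/(12\int m\,d\mu)$. Two small corrections here. Drop the product ``$\frac{1-a}{2}\cdot\frac{2-b-b^2}{3}$'': it is not a projection, and $p\wedge q$ is what you actually use. Also, no invariance check for your subspace is needed, since $E_{-3}$ is already invariant and you only use a pointwise lower bound on the diagonal of $P_{E_{-3}(G)}$.

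For the absence of finitely supported eigenfunctions your route genuinely differs from the paper's.

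The paper works with $h$ directly:
\begin{itemize}
\item It takes the fibre $\pi^{-1}(x_0)$ with $x_0$ furthest from the origin.
\item It applies the leaf-vertex argument at $x_1^\pm\in G^\pm$ to get $h(x_0^\pm)=0$.
\item Then the restriction of $h$ to each dangling interval of that fibre is a $-3$-eigenvector of a path, which is impossible since path spectra lie in $(-2,2)$.
\end{itemize}

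You instead pass to $v=f(\cdot,1)-f(\cdot,-1)$ and $u=f(\cdot,1)+f(\cdot,-1)$. You eliminate the paths via the Chebyshev recursion, which is a Schur complement. Then you run the same leaf argument for $A_{G^0}$ and for the Schr\"odinger operator $A_{G^0}+2c_{m(x)}$.

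The paper's version is shorter and uses only $|{-3}|>2$ on a single fibre. Yours needs the explicit recursion constants and their nonvanishing, but it yields more. The same reduction applies verbatim to $\ell^2$ eigenfunctions, so it describes the whole $-3$-eigenspace of $G$ in terms of $\ker(A_{G^0}+3)$ and $\ker(A_{G^0}+2c_{m(\cdot)}+3)$, not just its finitely supported part.
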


\begin{proof}
Due to the ergodicity of the action of $\Gamma$ on $(X, \mu)$,  for almost every~$x \in X$, the orbit $\Gamma x$ intersects all non-empty preimages of~$m$. This means that our random rooted graph almost surely has an induced subgraph isomorphic to an interval of arbitrary large length. Hence, we have the leaf-wise amenability. 

Let us show that~$\rho$ almost surely $(G,o)$ has a $-3$-eigenfunction. As the action of~$\Gamma$ on $(X,\mu)$ is essentially free, $(G,o)$ contains $\rho$-almost surely two copies~$G^+$ and~$G^-$ of~$G^0$ located in such a way that there exist two graph isomorphisms~$i^+ \colon G^0 \to G^+$ and $i^-\colon G^0 \to G^-$ such that for every $x\in V(G^0)$ the images~$x^+ =i^+(x)$ and~$x^- = i^-(x)$ are at distance~$2$ of each other in~$G$ and have exactly~$2$ common neighbours. As shown in~\cite{KK25}, $G^0$~has a~$-3$-eigenfunction~$f$. Let us define the function~$h$ on~$G$ as follows. For~$x \in G$, we we put $h(x^+)= f(x)$, $h(x^-)= -f(x)$, and $h(y) =0$ for every vertex $y \in G\setminus (G^+ \cup G^-)$. One can check that~$h$ is indeed a $-3$-eigenfunction by verifying the local condition at the points neighbouring~$ G^+$ and~$G^-$. Therefore, the~$-3$ eigenspace is almost surely non-empty and, hence, the density of states has an atom at~$-3$.

At the same time, one can see that $\rho$-almost surely the rooted graph~$(G,o)$ can not have a $-3$-eigenfunction with finite support. Indeed, suppose there is such a function~$h$. Let~$x_0 \in G^0$ be the furthest point away from the origin among the projections~$\pi(y)$ for $y \in \supp h \subset (G,o)$. Consider the set $\pi^{-1}(x_0)$. By construction, it consists of two segments of equal length glued to opposite vertices of a square. Let us denote $x_0^+$ and $x_0^-$ the vertices of the square to which nothing is glued (these are exactly the ones that belong to~$G^+$ and~$G^-$). Suppose that~$h(x_0^+)\not= 0$. In~$G^0$, there exists a vertex~$x_1$ such that it is further away from the origin than~$x_0$, it is connected to~$x_0$, and all the other neighbours of~$x_1$ are further away from the origin than~$x_0$. Then the point~$x_1^+$ is connected to the only point in $\supp h$ -- the point $x_0^+$. Therefore,~$A_G h (x_1^+) \not = 0$ and~$h$ can not be an eigenfunction. This argument shows that $h(x_0^+) = h(x_0^-) = 0$. Therefore,~$h$ restricted to one of the intervals of~$\pi^{-1}(x_0)$ is non-zero and is a $-3$-eigenfunction. The eigenvalues of an interval do not exceed~$2$ in absolute value, thus we obtain a contradiction and~$(G,o)$ does not have any finitely supported $-3$-eigenfunctions almost surely. In particular, $\rho$ is not weakly {F\o lner} balanced due to Theorem~\ref{theorem:main}.
\end{proof}

\bibliographystyle{plain} 
\bibliography{refs} 

\begin{center}

\textsc{Georgii Veprev, Section de math\'ematiques, Universit\'e de Gen\`eve, 1205~Gen\`eve, Switzerland}\\
\textit{E-mail address: }\texttt{georgii.veprev@gmail.com}\\[2ex]
\end{center}

\end{document}